\newtheorem{theorem}{Theorem}[section]
\newtheorem{corollary}[theorem]{Corollary}
\newtheorem{lemma}[theorem]{Lemma}
\newtheorem{proposition}[theorem]{Proposition}
\newtheorem*{cor:euler}{Corollary \ref{cor:euler}}
\newtheorem*{cor:orient0}{Corollary \ref{cor:orient0}}
\newtheorem*{cor:differentbridge}{Corollary \ref{cor:differentbridge}}
\newtheorem*{rep@theorem}{\rep@title}
\newcommand{\newreptheorem}[2]{%
\newenvironment{rep#1}[1]{%
 \def\rep@title{#2 \ref{##1}}%
 \begin{rep@theorem}}%
 {\end{rep@theorem}}}
\newtheorem{question}[theorem]{Question} 
\newtheorem{claim}[theorem]{Claim} 
\newtheorem{questions}[theorem]{Questions}
\theoremstyle{definition}
\newtheorem{definition}[theorem]{Definition}
\newtheorem{remark}[theorem]{Remark}
\newtheorem{remarks}[theorem]{Remarks}
\newtheorem*{observation*}{Observation}
\theoremstyle{definition}
\newtheorem{example}{Example}[section]
\newcommand{\leftrarrows}{\mathrel{\raise.75ex\hbox{\oalign{%
  $\scriptstyle\leftarrow$\cr
  \vrule width0pt height.5ex$\hfil\scriptstyle\relbar$\cr}}}}
\newcommand{\lrightarrows}{\mathrel{\raise.75ex\hbox{\oalign{%
  $\scriptstyle\relbar$\hfil\cr
  $\scriptstyle\vrule width0pt height.5ex\smash\rightarrow$\cr}}}}
\newcommand{\Rrelbar}{\mathrel{\raise.75ex\hbox{\oalign{%
  $\scriptstyle\relbar$\cr
  \vrule width0pt height.5ex$\scriptstyle\relbar$}}}}
\def\leftrightarrowsfill@{\arrowfill@\leftrarrows\Rrelbar\lrightarrows}
\newcommand{\xleftrightarrows}[2][]{\ext@arrow 3399\leftrightarrowsfill@{#1}{#2}}
\newcommand{\Z}{\mathbb{Z}}
\newcommand{\R}{\mathbb{R}}
\newcommand{\CP}{\mathbb{CP}}
\newcommand{\DD}{\mathbb{D}}
\newcommand{\TT}{\mathfrak{T}}
\newcommand{\Ss}{\mathcal{S}}
\newcommand{\D}{\mathcal{D}}
\newcommand{\T}{\mathcal{T}}
\newcommand{\sS}{\mathcal{S}}
\newcommand{\Ll}{\mathcal{L}}
\newcommand{\Rr}{\mathcal{R}}
\newcommand{\be}{\begin{enumerate}}
\newcommand{\ee}{\end{enumerate}}
\let\int\relax
\newcommand{\int}{\mathring}
\newcommand{\boundary}{\partial}
\DeclareMathOperator{\tube}{{Tube}}
\definecolor{darkpurple}{rgb}{.5,0,.5}
\begin{document}

\title{Bridge trisections and classical knotted surface theory}

\author{Jason Joseph}
\address{Department of Mathematics\\Rice University\\Houston, TX, USA}
\email{jason.joseph@rice.edu}

\author{Jeffrey Meier}
\address{Department of Mathematics\\Western Washington University\\Bellingham, WA, USA}
\email{jeffrey.meier@wwu.edu}

\author{Maggie Miller}
\address{Department of Mathematics\\Stanford University\\Stanford, CA, USA}
\email{maggie.miller.math@gmail.com}

\author{Alexander Zupan}
\address{Department of Mathematics\\University of Nebraska-Lincoln\\Lincoln, NE, USA}
\email{zupan@unl.edu}

%\thanks{}

\begin{abstract}
	We seek to connect ideas in the theory of bridge trisections with other well-studied facets of classical knotted surface theory.
	First, we show how the normal Euler number can be computed from a tri-plane diagram, and we use this to give a trisection-theoretic proof of the Whitney--Massey Theorem, which bounds the possible values of this number in terms of the Euler characteristic.
	Second, we describe in detail how to compute the fundamental group and related invariants from a tri-plane diagram, and we use this, together with an analysis of bridge trisections of ribbon surfaces, to produce an infinite family of knotted spheres that admit non-isotopic bridge trisections of minimal complexity.
\end{abstract}

\maketitle

%%%%%%%%%%%%%%%%%%%%%%%%%%%%%%%%%%%%%%%%%%%%%%%%%%%%%%%%%%%%%%%%%%%
%%%%%%%%%%%%%%%%%%%%%%%%%%%%%%%%%%%%%%%%%%%%%%%%%%%%%%%%%%%%%%%%%%%
\section{Introduction}\label{sec:intro}

In this paper, we study bridge trisections of surfaces in $S^4$, as originally introduced by the second and fourth authors in \cite{MeiZup_bridge1}. A {\emph{bridge trisection}} of a surface $\Ss$ in $S^4$ is a certain decomposition of $(S^4,\Ss)$ into three trivial disk systems $(B^4_1,\D_1),(B^4_2,\D_2),(B^4_3,\D_3)$, a four-dimensional analog of a bridge splitting, which cuts a link in $S^3$ into two trivial tangles.  The purpose of this paper is to connect the theory of bridge trisections with a number of different ideas and results in classical knotted surface theory.  In particular, we demonstrate how to use a bridge trisection of a surface $\Ss\subset S^4$ to compute various invariants of $\Ss$ and to obtain other topological information.  We give a more precise definition and much relevant background information in Section \ref{sec:prelim}.

In Section \ref{sec:broken}, we describe a method of obtaining a broken surface diagram for $\Ss$ from a tri-plane diagram. Using this method, we can recover the Euler number $e(\Ss)$ of the normal bundle of $\Ss$.

\begin{cor:euler}
Let $\DD=(\DD_1,\DD_2,\DD_3)$ be a tri-plane diagram of a surface $\Ss\subset S^4$. Let $w_i$ be the writhe of the diagram $\DD_i\cup\overline{\DD}_{i+1}$. Then $e(\Ss)=w_1+w_2+w_3$.
\end{cor:euler}

As one application, we obtain the following well-known result.

\begin{cor:orient0}
If $\Ss$ is oriented, then $e(\Ss)=0$.
\end{cor:orient0}

As another application, we deduce a new proof of the Whitney--Massey Theorem~\cite{massey} on the Euler number of a surface in $S^4$.

\begin{reptheorem}{thm:massey}
If $\Ss$ is connected and non-orientable with Euler characteristic $\chi$, then  \[e(\Ss)\in\{2\chi-4,2\chi,2\chi+4,\ldots,-2\chi-4,-2\chi,-2\chi+4\}.\]
\end{reptheorem}

In Section~\ref{sec:pi1}, we describe how to calculate the fundamental group of the complement of~$\Ss$.

\begin{reptheorem}{thm:grp_pres}
	Let $\DD$ be a $(b;\bold c)$--tri-plane diagram for a surface knot $\Ss\subset S^4$.  Then $\pi_1(S^4\setminus\nu(\Ss))$ admits a presentation of each of the following types:
	\begin{enumerate}
		\item\label{firstpres} $2b$ meridional generators and $3b$ Wirtinger relations,
		\item $b$ meridional generators and $2b$ Wirtinger relations, or
		\item\label{thirdpres} $c_i$ meridional generators and $b$ Wirtinger relations (for any $i\in\Z_3$).
	\end{enumerate}
Moreover, these presentations can be obtained explicitly from $\DD$.
\end{reptheorem}

Additionally, we use these presentations to show how one may recover more sophisticated information such as the peripheral subgroup of $\Ss$.

In Subsection \ref{subsec:ribbon}, we show how to construct a bridge trisection from any ribbon presentation of a ribbon surface. These bridge trisections always have triple point number zero.  In Subsection \ref{subsec:Nielsen}, we prove that such a bridge trisection respects the Nielsen class of the original ribbon presentation, yielding the following corollary.

\begin{cor:differentbridge}
There exist infinitely many ribbon 2--knots with pairs of bridge trisections $\TT$ and $\TT'$, both induced by ribbon presentations, which are non-isotopic as bridge trisections.
\end{cor:differentbridge}

We conclude with several questions about ribbon bridge trisections.

\subsection*{Acknowledgements}

This paper began following discussions at the workshop \emph{Unifying 4--Dimensional Knot Theory}, which was hosted by the Banff International Research Station in November 2019, and the authors would like to thank BIRS for providing an ideal space for sparking collaboration.
We are grateful to Masahico Saito for sharing his interest in the Seifert algorithm for bridge trisections and motivating this paper, as well as a subsequent one.
The authors would like to thank Rom\'an Aranda, Scott Carter, and Peter Lambert-Cole for helpful conversations.  JJ was supported by MPIM during part of this project, as well as NSF grants DMS-1664567 and DMS-1745670. JM was supported by NSF grants DMS-1933019 and DMS-2006029. MM was supported by MPIM during part of this project, as well as NSF grants DGE-1656466 (at Princeton) and DMS-2001675 (at MIT) and a research fellowship from the Clay Mathematics Institute (at Stanford).  AZ was supported by MPIM during part of this project, as well as NSF grants DMS-1664578 and DMS-2005518.

\section{Preliminaries}\label{sec:prelim}
We work throughout in the smooth category. We begin by describing the simplest 4--manifold trisection, which is the only one necessary for understanding the bulk of this paper. We refer the reader to \cite{gaykirby} for more information on general trisections.

\subsection{Bridge trisections}

We refer the reader to~\cite{MeiZup_bridge1} for complete details, but give the definition of a bridge trisection here for completeness.

%\begin{definition}
%The {\emph{(genus zero) trisection}} of $S^4$ is a decomposition
%$$S^4 = X_1\cup X_2\cup X_3$$
%into three 4--balls $X_1$, $X_2$,and $X_3$ meeting along their boundaries with $H_i = X_i\cap X_j$ a 3--ball (for $i\neq j$) and $\Sigma = X_1\cap X_2\cap X_3$ a 2--sphere. This decomposition is unique up to diffeomorphism in the following sense: Given two such decompositions of $S^4$, there is a diffeomorphism $\phi$ of $S^4$ mapping one decomposition onto the other.
%Recently, David Gay outlined a proof that any two trisections of $S^4$ that are diffeomorphic (in the above sense) are actually isotopic in the sense that $\phi$ can be chosen to be isotopic to the identity~\cite{Gay_seminar}.
%\end{definition}
%
%One may construct the genus zero trisection of $S^4$ by writing $S^4=\R^4\cup\{\infty\}$ and $S^2=\R^2\cup\{\infty\}$. Let $\pi\colon S^4\to S^2$ send $\infty$ to $\infty$ and be projection to $(\R^2\times0\times0)$ in $\R^4$. Letting $(r,\theta)$ denote polar coordinates on $\R^2$, then take \[X_i=\left(\pi^{-1}\left(\{r\ge0,\theta\in\left[\frac{2\pi(i-1)}{3},\frac{2\pi i}{3}\right]\right)\right)\cup\{\infty\}.\]
Let $S^4 = X_1\cup X_2\cup X_3$ be the standard genus zero trisection. We adopt the orientation conventions that $\Sigma = \partial H_i$ for each $i$ and $\partial X_i = H_i\cup\overline H_{i+1}$.

%The genus zero trisection of $S^4$ is analogous to the genus zero Heegaard splitting of $S^3$.  A bridge trisection encapsulates how a knotted surface interacts with this decomposition.
\begin{definition}[\cite{MeiZup_bridge1}]
Let $\Ss$ be a (smooth) closed surface in $S^4$. We say that $\Ss$ is in {\emph{$(b;\mathbf{c})$--bridge position}}, where $b$ is a positive integer and $\mathbf{c}=(c_1,c_2,c_3)$ is a triple of positive integers, if the following are all true:
\begin{enumerate}
\item For each $i$, $\D_i = X_i\cap \Ss_i$ is a collection of $c_i$ boundary-parallel disks in the 4-ball $X_i$.
\item For each $i$, $\T_i = H_i\cap\Ss$ is a boundary-parallel tangle in the 3-ball $H_i$; and
\item $\Sigma\cap\Ss$ is $2b$ points called {\emph{bridge points}};
\end{enumerate}
We denote $H_i\cap\Ss$ by $\alpha$, $\beta$, and $\gamma$, for $i=1,2,3$, respectively.
\end{definition}

Given $\Ss\subset S^4$ in bridge position, we may refer to the decomposition
$$(S^4,\Ss) = (X_1,\D_1)\cup(X_2,\D_2)\cup(X_3,\D_3)$$
as a {\emph{bridge trisection}} of $\Ss$, which we denote by $\TT$. We say that two bridge trisections $\TT$ and $\TT'$ are \emph{equivalent} if there is a diffeomorphism $\phi\colon S^4\to S^4$ with
$$\phi((X_i,\D_i))=(X_i',\D_i')$$
for all $i\in\Z_3$.
Any (smooth) closed surface in $S^4$ can be isotoped into bridge position, regardless of connectivity, genus, or orientability~\cite{MeiZup_bridge1}.
%\begin{remark}
% Note that $b$ and $\bold c$ together determine the Euler characteristic of the described surface $\Ss$: since $\Ss$ can be built from $c_1+c_2+c_3$ disks by gluing them together along $3b$ arcs and capping off $2b$ resulting boundary components (centered at bridge points), we conclude \[\chi(\Ss)=c_1+c_2+c_3-b.\]
%\end{remark}

\subsection{Diagrams for bridge trisections}

Bridge trisections may be viewed as the analog to bridge position of a link in $S^3$. One purpose of this article is to demonstrate that bridge trisections are useful for similar reasons. In particular, the theory produces simple diagrams of surfaces that we will use to perform several different computations.

\begin{definition}
A \emph{tri-plane diagram} is a triple of trivial tangle diagrams $\DD = (\DD_1, \DD_2, \DD_3)$ such that $\DD_i\cup\overline\DD_{i+1}$ is a planar diagram for an unlink.
\end{definition}

If each $\DD_i$ is a $b$-stranded trivial tangle and each $\DD_i\cup\overline\DD_{i+1}$ is a $c_i$--component unlink, then up to isotopy, the tri-plane diagram $\DD = (\DD_1, \DD_2, \DD_3)$ determines a $(b,\bold c)$--bridge trisection of a surface $\Ss$ in the following way:
The tri-plane diagram $\DD$ determines the intersection of $\Ss$ with a regular neighborhood of $H_1 \cup H_2 \cup H_3$.  The remainder of $\Ss$ consists of three systems of boundary parallel disks in the 4--balls $X_i$. But boundary parallel disks in a 4--ball are determined up to isotopy rel-boundary by their boundary (see e.g. \cite{kss,liv}).

\begin{remark}
If two tri-plane diagrams $\DD=(\DD_1,\DD_2,\DD_3)$ and $\DD'=(\DD'_1,\DD'_2,\DD'_3)$ describe isotopic surfaces in $S^4$, then by \cite{MeiZup_bridge1} the diagram $\DD$ can be transformed into $\DD'$ by a sequence of the following moves, illustrated in Figures~4 and~27 of~\cite{MeiZup_bridge1}.
\begin{enumerate}
\item {\emph{Mutual braid transposition}}, in which $\DD_1$, $\DD_2$, and $\DD_3$ are replaced by concatenations $\DD_1\beta$, $\DD_2\beta$, and $\DD_3\beta$ (respectively) for some braid diagram $\beta$, %One may view this as an isotopy of the three described tangles supported near the boundary of $B^3$ which all agree on this identified boundary.
\item {\emph{Elementary perturbation and deperturbation}}, as illustrated in generality in Figure~27 of~\cite{MeiZup_bridge1}. This operation increases one component of $\mathbf{c}$; the roles of $X_1,X_2,X_3$ as illustrated may be permuted cyclically,
\item Interior Reidemeister moves.
\end{enumerate}
See Subsection~2.5 and Section~6 of~\cite{MeiZup_bridge1} for complete details regarding these moves.
\end{remark}

%It is often useful to consider the projections of the tangles described by a tri-plane diagram to the 2--sphere $\Sigma$ bounding each 3--ball, so as to gather all of the information of the tri-plane diagram in a single picture. Given a collection $A$ of a embedded arcs in a 2--sphere $\Sigma$, there is unique tangle $\T_A$ determined by perturbing the interiors of the arcs into the interior of a 3--ball bounded by $\Sigma$. This process can be reversed: the strands of a trivial tangle can be isotoped rel-boundary to give a collection of embedded arcs in the boundary 2--sphere, though the result is not unique up to isotopy.

%\begin{definition}
%A \emph{shadow diagram} is a triple $(A,B,C)$ satisfying the following.
%\begin{enumerate}
%\item Each of $A$, $B$, and $C$ is a system of disjoint, embedded arcs in a 2--sphere $\Sigma$; and $\partial A=\partial B=\partial C$.
%\item The pairwise unions of the tangles $\T_A$, $\T_B$, and $\T_C$ determined by the arcs are unlinks.
%\end{enumerate}
%\end{definition}
%
%See~\cite{MTZ_graph} for a detailed discussion of shadow diagrams. In light of all of this, we see that a shadow diagram determines a bridge trisection (up to isotopy), which in turn determines a tri-plane diagram (up to interior Reidemeister moves and mutual braid moves).

\begin{remark}\label{rem:rollspun}

In the original \cite{MeiZup_bridge1} construction of bridge trisections, the authors give an algorithm to obtain a tri-plane diagram of a surface $\Ss$ given a {\emph{banded unlink diagram}} $(L,B)$ of $\Ss$. A banded unlink diagram, introduced by \cite{kss}, consists of an unlink $L$ and a set of bands $B$ attached to $L$, with the property that surgering $L$ along $B$ yields another unlink $L_B$. The diagram $(L,B)$ determines the surface $\Ss$ in $S^4=S^3\times I/\sim$ up to smooth ambient isotopy. Here, $\Ss$ consists of the following pieces.
\begin{itemize}
\item A collection of disks bounded by $L$ in $S^3\times 1/4$, 
\item $L\times[1/4,1/2]$,
\item $(L\cup B)\times\{1/2\}$,
\item $L_B\times[1/2,3/4]$,
\item a collection of disks bounded by $L_B$ in $S^3\times 3/4$.
\end{itemize}

If $(L,B)$ is in bridge position with respect to a sphere $F$ splitting $S^3$ into 3-balls $B_1$ and $B_2$ (with bands in $B_2$), then $(\D_1,\D_2,\D_3)$ is a bridge trisection of $\Ss$, with $\D_1,\D_2,\D_3$ diagrams of the tangles $(\overline{B_1},\overline{B_1\cap L})$, $(B_2,B_2\cap L)$, $(B_2,B_2\cap L_B)$, respectively. We refer the reader to \cite{MeiZup_bridge1} for more details, but include Figure \ref{fig:rollspun} to illustrate that one generally expects to find tri-plane diagrams of high complexity. We give a more detailed caption of Figure \ref{fig:rollspun} now.

\begin{enumerate}[label=(\alph*)]
\item
In (a), we draw a band diagram of the standard ribbon disk for $\overline{K}\#K$. This consists of the link $\overline{K}\#K$ (the boundary of the disk) and horizontal bands with the property that $\overline{K}\#K$ surgered along these bands is an unlink. We also draw a torus about the $K$ summand; Litherland describes a homeomorphism $\rho$ of $S^3$ supported near this torus consisting of a Dehn twist about a 0-framed longitude of $K$. The roll-spun knot of $K$ is a knotted sphere obtained by gluing two copies of this disk via the boundary homeomorphism $\rho$.  (See \cite{litherland} for the original construction.)
\item In (b), we apply $\rho$ to the diagram of $(a)$.
\item Combining these diagrams in $(c)$ (where we have dualized the bands in $(a)$ and isotoped $(b)$ to simplify the diagram) yields a banded unlink diagram for the roll-spun knot of $K$.
\item In (d) we further isotope this diagram to make the bands appear small. Since we started with $\overline{K}\# K$ in bridge position in (a), this banded unlink is very nearly in bridge position.
\item In (e), we perturb the diagram slightly near the bands to obtain a banded unlink diagram in bridge position; we include a horizontal line indicating the bridge sphere. The lower 3-ball is $B_1$ and the upper 3-ball is $B_2$.
\item Finally in (f), we obtain a tri-plane diagram for the roll-spun knot of $K$.
\end{enumerate}

\begin{figure}
{\centering
\labellist
\pinlabel {(a)} at 130 565
\pinlabel {(b)} at 398 565
\pinlabel {(c)} at 660 565
\pinlabel {(d)} at 252 363
\pinlabel {(e)} at 550 363
\pinlabel {(f)} at 398 160
\pinlabel {3} at 508 489
\pinlabel {3} at 774 489
\pinlabel {3} at 376 274
\pinlabel {3} at 642 274
\pinlabel {-3} at 235 51
\endlabellist
\includegraphics[width=5.5in]{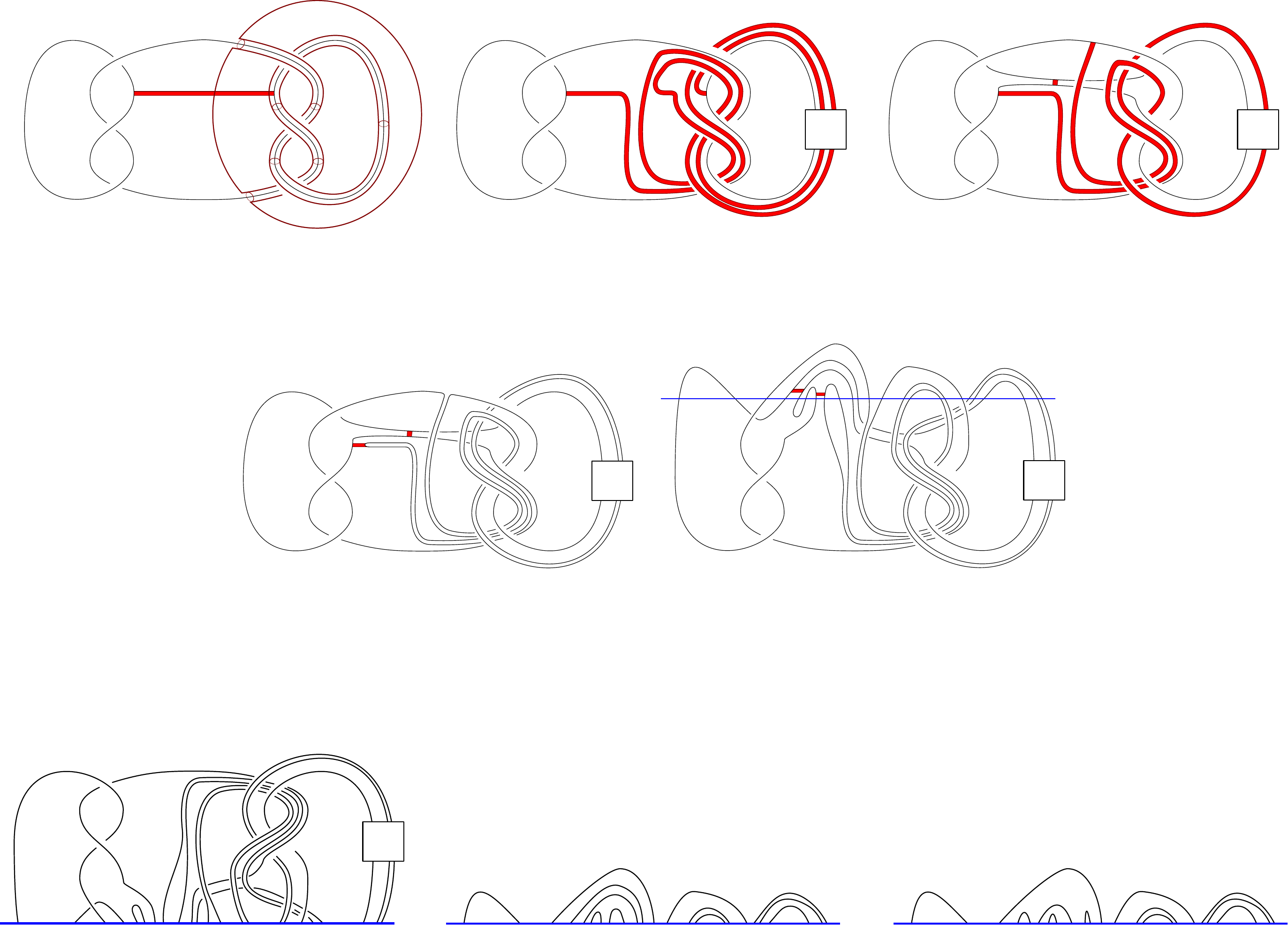}
\vspace{.2in}
\caption{The process of obtaining a tri-plane diagram of a roll-spun knot. We provide a more detailed caption in Remark \ref{rem:rollspun}.}
\label{fig:rollspun}
}
\end{figure}
\end{remark}

\subsection{Unknotted surfaces}
\label{subsec:unknotted}

We now recall the notion of unknottedness for surfaces in $S^4$; see~\cite{MTZ_graph} for a related discussion of bridge trisections of unknotted surfaces.

\begin{definition}
Let $\Ss$ be a closed, connected, orientable surface in $S^4$. We say that $\Ss$ is \emph{unknotted} if $\Ss$ bounds an embedded, 3-dimensional handlebody in~$S^4$.

If $\Ss\cong\mathbb{RP}^2$, then we say that $\Ss$ is {\emph{unknotted}} if it is isotopic to one of the two $\mathbb{RP}^2$s in Figure~\ref{fig:unknottedrp2}; we denote these surfaces by $P_\pm$, noting that $e(P_+) = +2$; see Remark~\ref{rmk:sign}. Otherwise, if is $\Ss$ closed, connected, and nonorientable, we say that $\Ss$ is {\emph{unknotted}} if $\Ss$ is isotopic to a connected sum of unknotted $\mathbb{RP}^2$s.

A disconnected surface $\Ss = \Ss_1\cup\cdots\cup \Ss_k$ in  $S^4$ is said to be \emph{unknotted} if there exist disjoint 4--balls $B_1,\ldots, B_k\subset S^4$ with $\Ss_i\subset B_i$, and each $S_i$ is unknotted.
\end{definition}

\begin{figure}[htp]
	\centering
	\includegraphics[width=.8\linewidth]{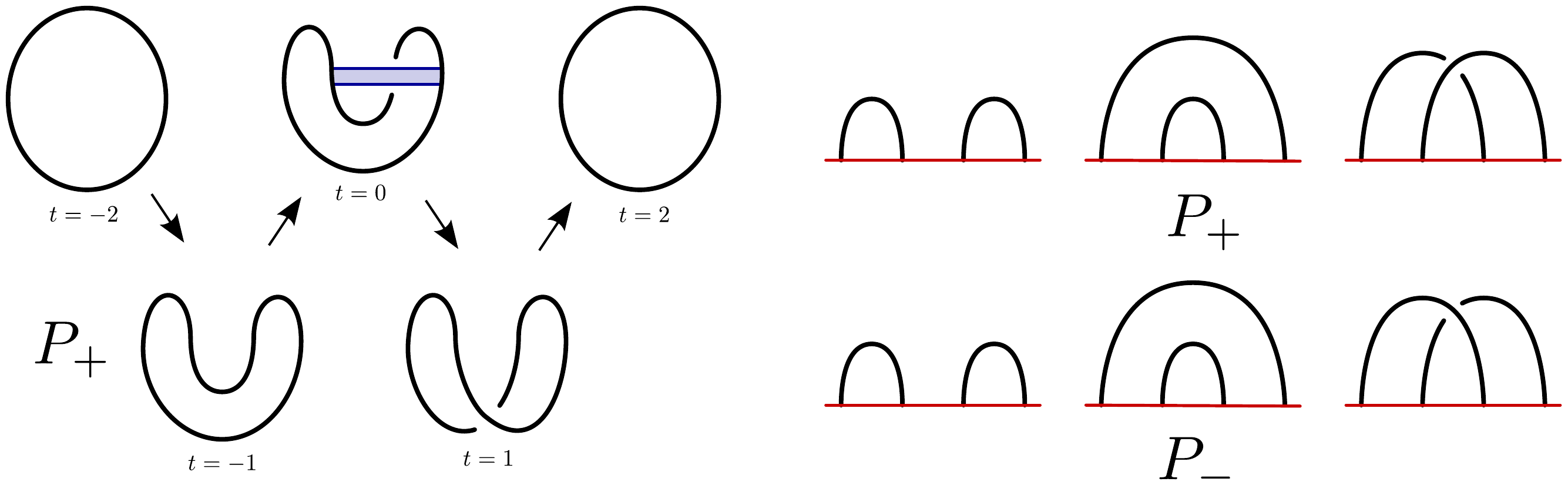}
	\caption{\textbf{Left:} A motion picture describing the unknotted projective plane $P_+$, the unknotted $\mathbb{RP}^2$ with Euler number $+2$.
	\textbf{Top-right:} A tri-plane diagram for $P_+$.
	\textbf{Bottom-right:} A tri-plane diagram for $P_-$.}
	\label{fig:unknottedrp2}
\end{figure}

\begin{remark}
\label{rmk:sign}
	There is a subtlety regarding sign conventions for the unknotted projective planes that is worth noting. Our convention is to denote by $P_+$ the unknotted projective plane with normal Euler number $+2$. As a consequence, we have that the 2--fold cover of $S^4$, branched along $P_+$ is $\overline\CP^2$.
	See Figure~\ref{fig:P-DBC}.
	This is often confused in the literature; for example, it seems that the  orientation of $S^4$ that is adopted in~\cite{Kamada-non} is the opposite of the usual one, leading to the conclusion that $\CP^2$ is the 2--fold cover of $\overline S^4$ branched along $\overline P_+$, which is there denoted ``$P_+$".
	While this seems to be technically correct, it does confuse the issue, slightly.
	In the same vein, Figure~\ref{fig:unknottedrp2} corrects Figure~15 of~\cite{MeiZup_bridge1}, where the motion-picture is mislabeled, though the tri-plane diagrams are correctly labeled.
	Figure~\ref{fig:P-DBC} corrects Figure~2 of~\cite{MeiZup_bridge2}.
	The careful reader will note that when taking branched covers of tri-plane diagrams, it is more natural to revolve a tri-plane diagram 180 degrees, so that the tangles descend from the bridge surface instead of ascending, as shown. (Alternatively, one might view the tangles from below their boundary.)
\end{remark}

\begin{figure}[htp]
	\centering
	\includegraphics[width=.7\linewidth]{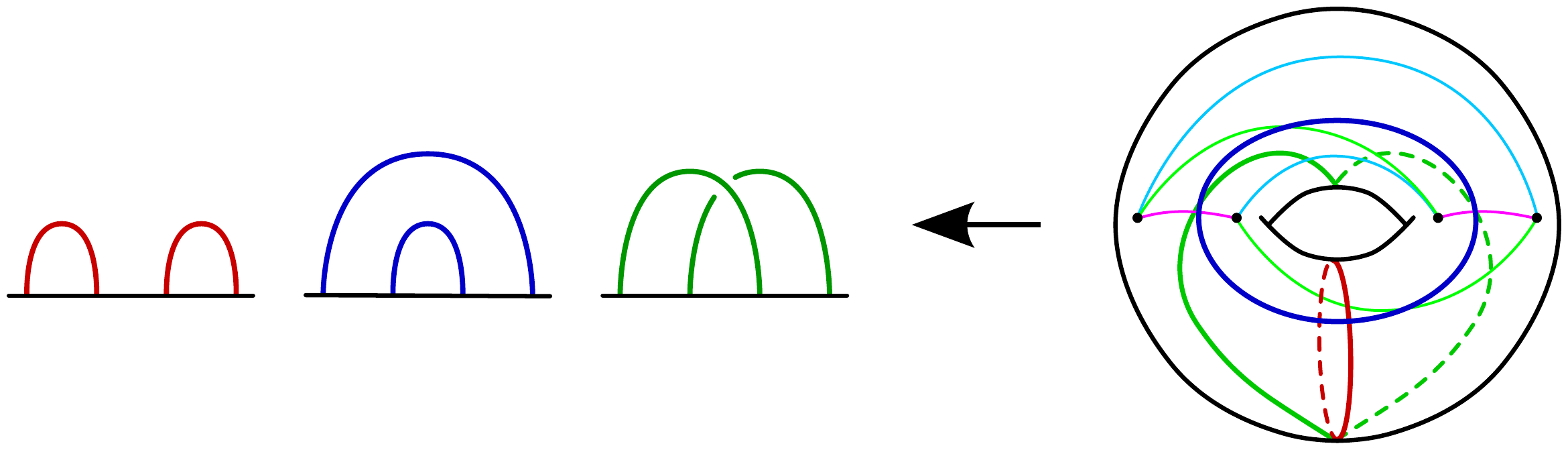}
	\caption{The genus one trisection of $\CP^2$, realized as the 2--fold branched cover of the 2--bridge trisection of $P_-$.}
	\label{fig:P-DBC}
\end{figure}

%%%%%%%%%%%%%%%%%%%%%%%%%%%%%%%%%%%%%%%%%%%%%%%%%%%%%%%%%%%%%%%%%%%
%%%%%%%%%%%%%%%%%%%%%%%%%%%%%%%%%%%%%%%%%%%%%%%%%%%%%%%%%%%%%%%%%%%
\section{Broken surface diagrams vs. bridge trisections}\label{sec:broken}
%%%%%%%%%%%%%%%%%%%%%%%%%%%%%%%%%%%%%%%%%%%%%%%%%%%%%%%%%%%%%%%%%%%
%%%%%%%%%%%%%%%%%%%%%%%%%%%%%%%%%%%%%%%%%%%%%%%%%%%%%%%%%%%%%%%%%%%

In this section, we discuss the relationship between tri-plane diagrams and broken surface diagrams of  a surface $\Ss$ smoothly embedded in $S^4$.  As result, we obtain a new formula for the normal Euler number $e(\Ss)$ that depends on the writhes of the pairings of tangle diagrams in a tri-plane diagram $\DD$ for $\Ss$, and we give a new proof of the Whitney-Massey Theorem.  We also explore the relationship between ribbon surfaces and bridge trisections.

\subsection{Broken surface diagrams}\label{subsec:broken}
We start by reviewing the notion of a broken surface diagram; see e.g. \cite{CarterSaito} for more exposition.

\begin{definition}

Let $\Ss$ be a surface smoothly embedded in $S^4$. Let $P_S$ be the projection of $\Ss$ to the equatorial $S^3$ of $S^4$. Assume that $P_S$ is generic; i.e. $P_S$ is a smoothly embedded surface away from self-intersections that come in three possible types: arcs of double points, branch points (which necessarily end arcs of double points), and triple points (which are intersections of three arcs of double points). The branch points and triple points are all isolated.
%See Figure \ref{fig:localmodel}.

Near each self-intersection of $P_S$, remove a small neighborhood of the intersection from the sheet(s) that is lower in the fourth coordinate of $S^4 = \R^4\cup\infty$.
%See Figure \ref{fig:brokensurface}.
We call the resulting {\emph{broken surface}} $\mathbb{S}$, as it is embedded in $S^3$, a {\emph{broken surface diagram}} for $\Ss$. This is completely analogous to how one defines a classical knot diagram by indicating over and under information at each crossing.

We will generally refer to $P_S$ as the {\emph{underlying surface}} of $\mathbb{S}$, just as an immersed curve underlies a knot diagram.

\end{definition}

%\begin{figure}
%\input{localmodel.pdf_tex}
%\caption{A local model of a branch point, some double points, and a triple point of an immersed surface in $S^3$. Note that there are double points contained in the neighborhood of a branch point or triple point.}
%\label{fig:localmodel}
%\end{figure}

%\begin{figure}
%\input{breakthesurface.pdf_tex}
%\caption{We break the immersed surface $P_S$, the projection of $\Ss$ to the 3-sphere $\{h=0\}$, by deleting a neighborhood of each intersection point from the sheet that is lower in the $h$ coordinate. Here we draw a picture of a broken surface diagram $\mathbb{S}$ near a branch point, an arc of double points, and a triple point of $P_S$.}
%\label{fig:brokensurface}
%\end{figure}

%\begin{remark}
%Given a broken surface diagram $\mathbb{S}$ of $\Ss$, one can recover $\Ss$ up to ambient isotopy by starting with $P_S$ in $S^3$, and then pushing the portions of $P_S$ that are broken in $\mathbb{S}$ deeper into $S^4$. Again, this is completely analogous to how one recovers a knot from a knot diagram.
%\end{remark}

%Broken surface diagrams have been used to define a version of Seifert's algorithm \cite{SeifertAlgorithm}, compute the Euler number \cite{EulerNumber}, and define quandle colorings \cite{Surfaces4Space} of surfaces in $S^4$.
%Just like the Reidemeister theorem characterizes equivalence of knot diagrams, there are seven combinatorial moves on broken surface diagrams called {\emph{Roseman moves}} that relate any two diagrams describing isotopic surfaces \cite{Roseman}.

\begin{theorem}\label{thm:getbroken}
Let $\DD=(\DD_1,\DD_2,\DD_3)$ be a tri-plane diagram of a surface $\Ss\subset S^4$. From $\DD$, there is a procedure to produce a broken surface diagram $\mathbb{S}$ of $\Ss$.
\end{theorem}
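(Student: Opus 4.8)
The plan is to build the broken surface diagram $\mathbb{S}$ piece by piece, following the decomposition of $\Ss$ provided by the bridge trisection and the accompanying description of $\Ss$ in a neighborhood of $H_1\cup H_2\cup H_3$. The key observation is that each of the three 3-balls $H_i$ already sits inside the equatorial $S^3$ (up to isotopy), so the tangle $\T_i\subset H_i$ is literally a piece of the projection; the tri-plane diagram $\DD_i$ is the tangle diagram of $\T_i$ read off from the projection to a plane $P_i\subset H_i$. The only thing to produce, then, is (a) how the three tangle diagrams $\DD_1,\DD_2,\DD_3$ are glued together along the bridge points on $\Sigma$, and (b) how to cap off the resulting curves in $S^3$ with the projected images of the three systems of boundary-parallel disks $\D_i\subset X_i$.

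First I would set up coordinates: realize $S^3$ as the equatorial sphere and $H_1,H_2,H_3$ as three 3-balls meeting along $\Sigma$, with each $\DD_i$ living in a plane inside $H_i$; near $\Sigma$ the strand-ends of $\DD_i$ match up with the $2b$ bridge points. Gluing $\DD_1$ to $\overline{\DD}_2$ along these endpoints produces the planar unlink diagram for $\partial\D_1 = \partial\D_2$, and similarly for the other two pairings. Next, for each $i$, I would describe the image under projection of the boundary-parallel disks $\D_i\subset X_i$: since each $\D_i$ is a trivial disk system, it can be pushed to lie just on one side of $H_i\cup\overline{H}_{i+1}$ in $S^3$, so its projection is an embedded collection of disks bounded by the unlink diagram $\DD_i\cup\overline{\DD}_{i+1}$, placed at a fourth-coordinate level appropriate to $X_i$. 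Assembling the tangle pieces (which live at fourth coordinate $0$) together with the three disk systems (which push off to fourth coordinates associated to $X_1, X_2, X_3$), one gets an immersed surface in $S^3$ whose self-intersections occur exactly where two such pieces overlap; a small generic perturbation makes the projection generic, producing double-point arcs, and the branch/triple points arise near the bridge points and near the overlaps of the three disk systems. At each double-point arc one then deletes a neighborhood from the sheet with smaller fourth coordinate — but that information is recorded combinatorially: the crossing data of the unlink diagrams $\DD_i\cup\overline{\DD}_{i+1}$ tell us the relative heights of the tangle sheets, and the ordering $X_1 < X_2 < X_3$ of the disk systems tells us the rest.

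I expect the main obstacle to be step (b): controlling the projection of the boundary-parallel disk systems $\D_i$ so that the result is actually a \emph{broken surface diagram} — i.e.\ that the self-intersections are only the three allowed types, that branch points really do terminate double-point arcs, and that triple points are honest triple crossings — rather than an arbitrary immersed surface with worse singularities. The subtlety is that a boundary-parallel disk in $B^4$ is only canonical up to isotopy rel boundary, so I have freedom to choose a nice representative; the cleanest approach is to take the ``push-in'' of a spanning surface of the unlink diagram in $S^3$, arranged so that the disks for $X_i$ sit at a well-defined fourth-coordinate level between $\Sigma$ and the core of $X_i$, and then argue that a generic small perturbation of the union of all the pieces is a generic projection in the sense of the definition. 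This is essentially a transversality argument combined with a careful bookkeeping of where branch points are forced (namely at the $2b$ bridge points, where a tangle strand of $\T_i$ meets the corresponding cap in $\D_{i-1}$ or $\D_i$). Once the local model near each bridge point and near each triple overlap of disk systems is pinned down, the rest is routine assembly, and I would present the construction explicitly enough that the writhe computation underlying Corollary~\ref{cor:euler} can be read directly off $\mathbb{S}$.
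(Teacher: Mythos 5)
Your general setup (the tangles project to the three half-planes of the tri-plane, and the trivial disk systems fill in the three wedges between them) matches the paper's, and you correctly single out the hard step: making the projection of the boundary-parallel disk systems $\D_i$ explicit and generic. But you do not resolve that step, and the two concrete claims you make about it are wrong. First, the projection of $\D_i$ cannot be ``an embedded collection of disks bounded by the unlink diagram $\DD_i\cup\overline{\DD}_{i+1}$'': when that diagram has crossings its underlying multicurve is immersed, so any surface it bounds in the projection is forced to be singular there. Second, branch points are not ``forced at the $2b$ bridge points''---near a bridge point the three tangle sheets and three disk sectors assemble into an embedded disk in the projection, with no singularity at all. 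Appealing to a generic small perturbation only shows that \emph{some} broken surface diagram exists (true for any surface in $S^4$ by general position); it does not yield a procedure readable from $\DD$, and in particular it gives no control over the number and signs of the branch and triple points, which is exactly what the intended applications (Corollary~\ref{cor:euler} and the triple point number) require.

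The missing idea is the trace construction. Since $\DD_i\cup\overline{\DD}_{i+1}$ is an unlink diagram, choose a sequence $\Rr_i$ of Reidemeister moves taking it to a crossingless diagram $\DD_i'$; because trivial disk systems are determined up to isotopy rel boundary by their boundary, $\D_i$ may be realized as the movie that performs this isotopy and then caps $\DD_i'$ off with flat disks. Projecting that movie gives a broken surface $\mathbb{S}_i$ in the $i$-th wedge whose singularities are in explicit bijection with the moves of $\Rr_i$: each RI move contributes one branch point (with sign determined by the sign of the move), each RII move contributes only double arcs, and each RIII move contributes one triple point, as in Figure~\ref{fig:Rmoves} and Remark~\ref{rem:sign}. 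Gluing $\mathbb{S}_1,\mathbb{S}_2,\mathbb{S}_3$ along the tri-plane then yields $\mathbb{S}$. With this in place genericity is automatic---no transversality argument is needed---and the branch-point count feeding the writhe formula falls out immediately.
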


To prove Theorem \ref{thm:getbroken}, it will be useful to develop some notation to describe simple broken surfaces in $S^3$.

\begin{definition}\label{productsurface}
Let $\DD_L$ be a link diagram in $S^2$. We obtain a broken surface diagram~$\mathbb{L}$ in $S^2\times I$ whose underlying surface is $P_L=D_L\times I$, where $D_L$ is the immersed multicurve underlying $\DD_L$. At self-intersections of $P_L$, the sheet of $\mathbb{L}$ containing the corresponding undercrossing of $\DD\times 0$ is broken. We call $\mathbb{L}$ a {\emph{product}} broken surface, and may write $\mathbb{L}=\DD_L\times I$ as shorthand. We illustrate some product broken surface diagrams (and some non-product diagrams) in Figure \ref{fig:Rmoves}.
\end{definition}

In Definition \ref{productsurface}, we slightly abuse the notation, since $P_L$ is a surface with boundary properly immersed in $S^2\times I$ rather than a closed surface in $S^3$, but this distinction is not important in the setting of this paper.

\begin{remark}
Note that a product broken surface diagram contains only double arcs of intersections. That is, a product broken surface diagram does not include any triple points or branch points.
\end{remark}

\begin{definition}
Let $\DD_L\subset S^2$ be a link diagram. Let $\DD_L'$ be obtained from $\DD_L$ by a single Reidemeister move $R$. We obtain a broken surface diagram $\mathbb{T}_R$ in $ S^2\times I$ whose boundary is $\overline{(\DD_L'\times 0)}\sqcup (\DD_L\times 1)$ which agrees with the product $\DD_L\times I$ away from the support of $R$, and, near $R$, agrees with the corresponding diagram in Figure \ref{fig:Rmoves}. We call $\mathbb{T}_R$ the {\emph{trace}} of $R$.

If $\DD_J$ is obtained from $\DD_L$ by a sequence $\Rr = (R_1,\ldots, R_n)$ of Reidemeister moves, then we write $\mathbb{T}_\Rr$ to denote the concatenation of $\mathbb{T}_{R_1},\ldots,\mathbb{T}_{R_n}$. We call $\mathbb{T}_\Rr$  the \emph{trace} of $\Rr$.
\end{definition}

\begin{figure}
\includegraphics[width=85mm]{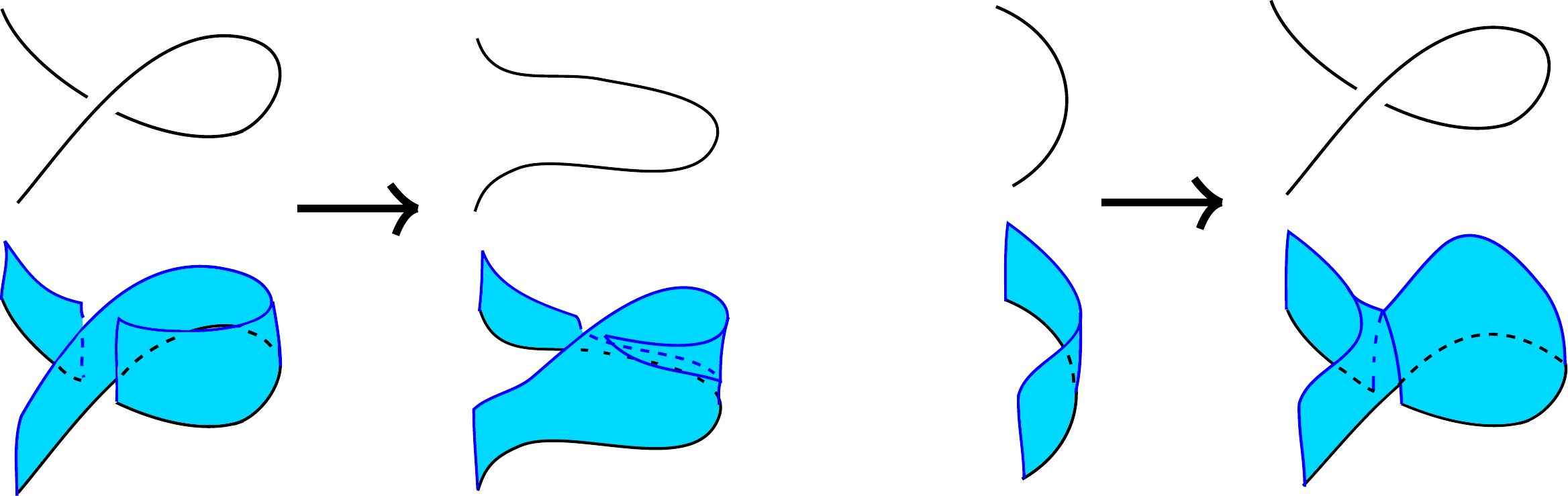}\\
\includegraphics[width=95mm]{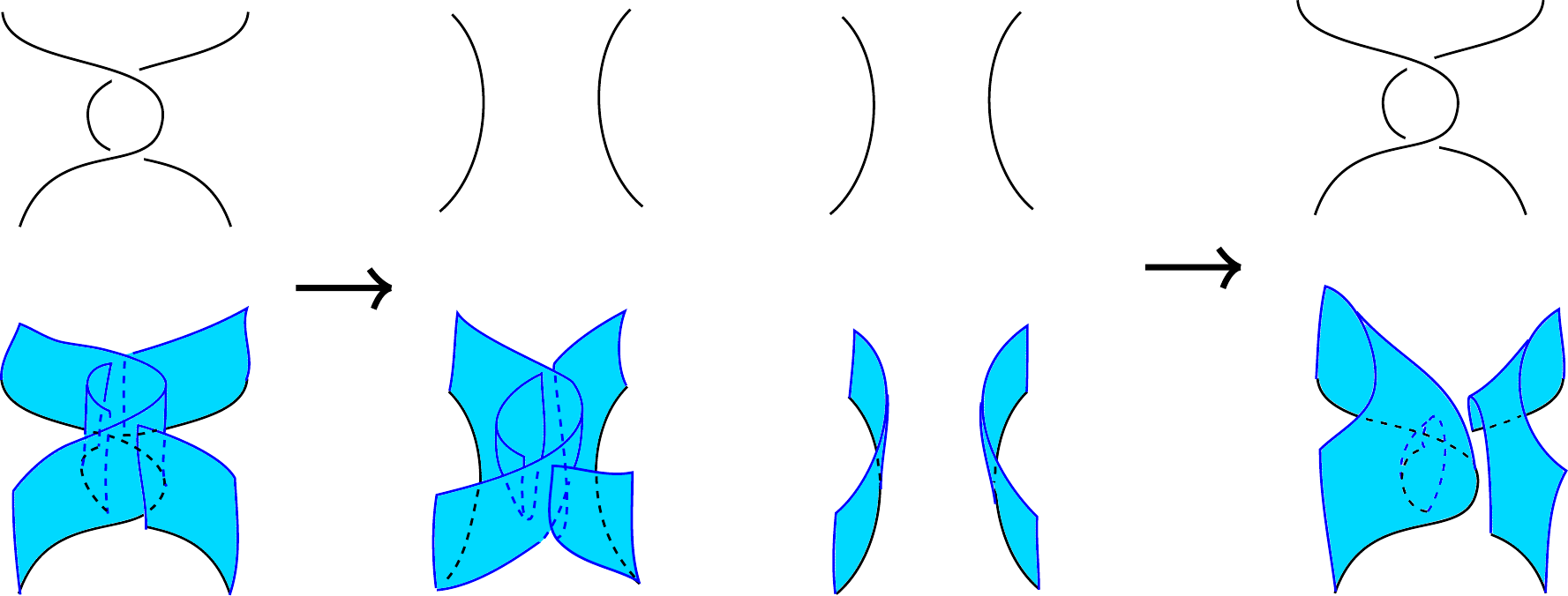}\\
\includegraphics[width=45mm]{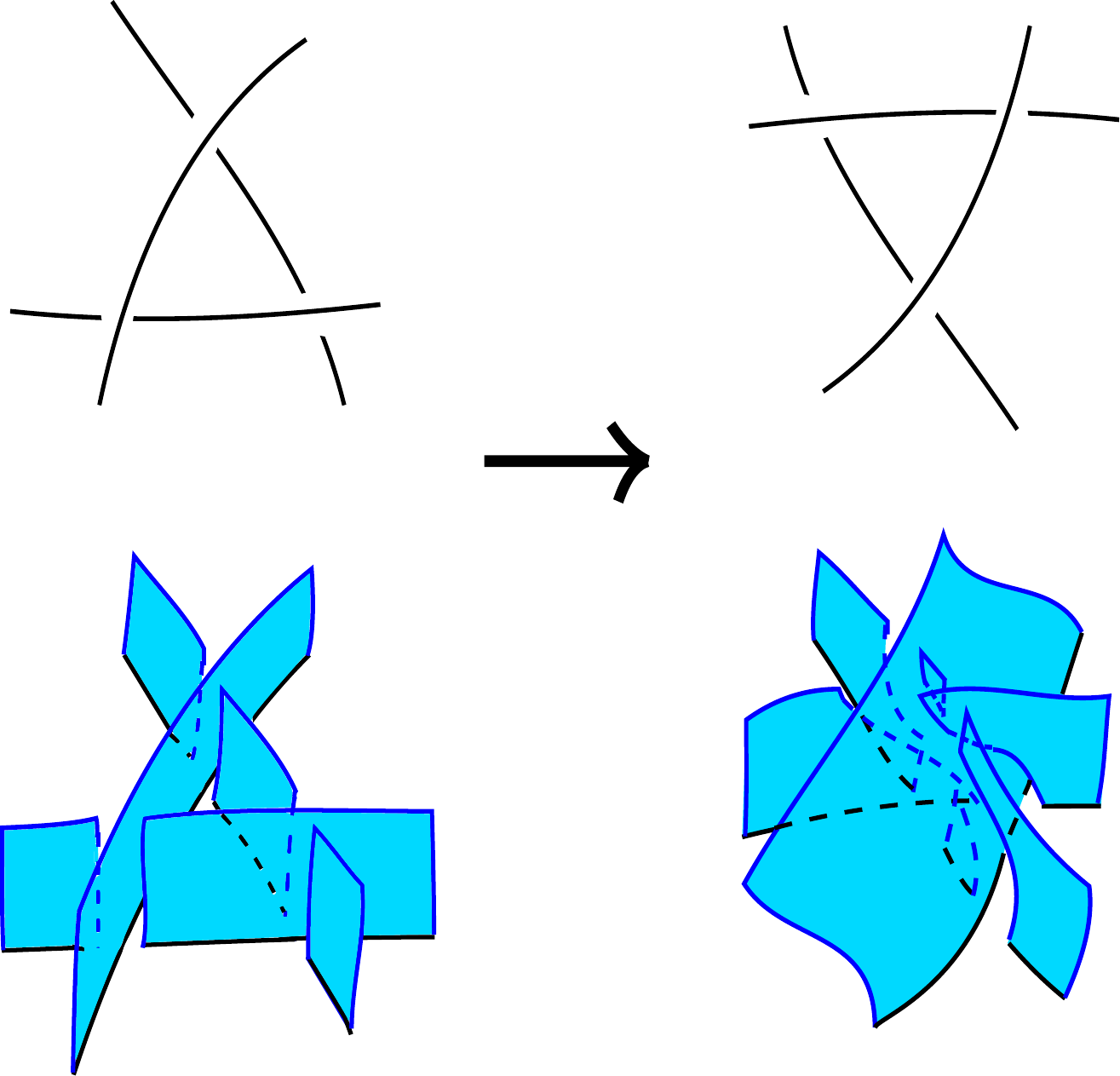}
\caption{The trace $\mathbb{T}_R$ of a Reidmeister move $R$. Below each possible Reidemeister move, we draw (on the left) a product broken surface and (on the right) the trace of the Reidemeister move. Note that changing the type of a crossing in $R$ corresponds to changing which sheet of the intersection in $\mathbb{S}$ is broken. {\bf{Top row:}} $R$ is Reidemeister~I. Note $\mathbb{S}$ contains a branch point. {\bf{Middle row:}} Reidemeister~II. {\bf{Bottom row:}} Reidemeister~III. Note $\mathbb{S}$ contains a triple point.}\label{fig:Rmoves}
\end{figure}

\begin{remark}\label{rem:sign}
If $R$ is a Reidemeister I move, $\mathbb{T}_R$ contains exactly one branch point and no triple points. The sign of the branch point depends on the sign of $R$: If $R$ is positive, i.e. the move $R$ adds a positive crossing or cancels a negative crossing, then the branch point will be negative (and vice versa). If $R$ is a Reidemeister II move, then $\mathbb{T}_R$ includes only double arcs of self-intersection. If $R$ is a Reidemeister III move, then $\mathbb{T}_R$ contains no branch points and exactly one triple point.

Similarly, if $\Rr = (R_1,\ldots, R_m)$ is a sequence of Reidemeister moves including $p$ positive RI moves, $n$ negative RI moves, $k$ RII moves and $m-(p+n+k)$ RIII moves, then the trace $\mathbb{T}_\Rr$ contains exactly $n$ positive branch points, $p$ negative branch points, and $m-(p+n+k)$ triple points.
\end{remark}

\begin{proof}[Proof of Theorem \ref{thm:getbroken}]
Recall that $\DD_i\cup\overline{\DD}_{i+1}$ is an unlink diagram. Therefore, there exists a sequence of Reidemeister moves $\Rr_i = (R_1,\ldots, R_{m_i})$ taking $\DD_i\cup\overline{\DD}_{i+1}$ to a crossingless diagram $\DD'_i$ of an unlink. Let $\mathbb{T}_i$ be the trace of $\Rr_i$. Cap off the $\DD'_i$ boundary of $\DD_i$ with trivial disks to obtain a broken surface diagram $\mathbb{S}_i$ with boundary $\DD_i\cup\overline{\DD'}_{i+1}$. 

Now embed the tri-plane in $\mathbb{R}^3$, so the diagrams $\DD_1,\DD_2,\DD_3$ lie in half-planes $P_1,P_2,P_3$ at angles $0,2\pi/3,4\pi/3$ about the $x$ axis. Note that $\DD_i$ is truly a diagram contained in a plane and not a tangle in space. Now $P_i\cup \overline{P_{i+1}}$ contains the diagram $\DD_i\cup\overline\DD_{i+1}$, which is the boundary of the broken surface $\mathbb{S}_i$ with $P_i\cup \overline{P_i+1}$. Thus, we may glue copies of $\mathbb{S}_1,\mathbb{S}_2,\mathbb{S}_3$ (correspondingly between $P_1,P_2$; $P_2,P_3$; $P_3,P_1$) to obtain a broken surface diagram $\mathbb{S}$ for $\Ss$.
\end{proof}

\subsection{Euler number and the Whitney--Massey theorem}

We can make use of Theorem~\ref{thm:getbroken} for computations that can be done with broken surface diagrams.

\begin{proposition}[\cite{EulerNumber}]\label{prop:branchsign}

Let $\mathbb{S}$ be a broken surface diagram of a surface $\Ss$. Assume $\mathbb{S}$ has $p$ positive branch points and $n$ negative branch points. Then $e(\Ss)=p-n$.

\end{proposition}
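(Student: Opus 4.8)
The plan is to identify $e(\Ss)$ with a signed count of local contributions at the branch points, via the standard description of the normal Euler number as a self-intersection. Recall that $e(\Ss)$ can be computed as the algebraic intersection number $[\Ss]\cdot[\Ss']$, where $\Ss'$ is a small pushoff of $\Ss$ along a generic section of its normal bundle $\nu(\Ss)$; equivalently, $e(\Ss)$ is the signed count of zeros of such a section. Even though $\nu(\Ss)$ is non-orientable when $\Ss$ is, the sign at each transverse point of $\Ss\cap\Ss'$ is still well defined: since $S^4$ is oriented, a choice of local orientation of $\Ss$ induces a matching one on $\Ss'$, and reversing it changes the intersection sign by $(-1)^2=+1$. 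Because the broken surface diagram $\mathbb S$ arises from a generic projection $\pi\colon\R^4\to\R^3$ (taking $S^4=\R^4\cup\infty$ with $\infty\notin\Ss$), I would take as pushoff the translate $\Ss'=\Ss+\varepsilon e_4$, where $e_4$ is the unit vector in the direction collapsed by $\pi$. For generic small $\varepsilon>0$ this is transverse to $\Ss$, and the normal part of the vector field $\varepsilon e_4|_\Ss$ is a generic section of $\nu(\Ss)$ whose signed zero count is realized by $\Ss\cap\Ss'$, so $[\Ss]\cdot[\Ss']=e(\Ss)$.

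Next I would localize the intersection $\Ss\cap\Ss'$ near the branch points. If $q$ and $q+\varepsilon e_4$ both lie on $\Ss$ with $\varepsilon$ small, there are two cases: either the two points lie on the same local sheet of $\Ss$, in which case $e_4$ becomes tangent to $\Ss$ in the limit $\varepsilon\to 0$, so $\pi|_\Ss$ fails to be an immersion there and $\pi(q)$ is a cross-cap point of $P_S$, i.e.\ a branch point; or they lie on two distinct sheets of $P_S$ over the same point of $\R^3$, in which case those sheets have $\pi$-vertical separation tending to $0$, forcing them to become tangent, which by genericity happens only at the end of a double arc, again a branch point. Hence for $\varepsilon$ small every point of $\Ss\cap\Ss'$ lies in a prescribed small neighborhood of a branch point, and it remains to analyze each branch point locally.

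The heart of the argument is the local normal-form computation. In suitable coordinates $\pi$ is projection off the last coordinate and $\Ss$ is the graph $\{(s,st,t^2,t):(s,t)\in\R^2\}$, whose $\pi$-image is the standard Whitney umbrella and whose broken surface diagram near the origin realizes a branch point; the model $\{(s,st,t^2,-t)\}$, which differs by the orientation-reversing map $x_4\mapsto -x_4$ and so breaks the opposite sheet, realizes a branch point of the opposite sign. For the first model one solves $\Ss\cap(\Ss+\varepsilon e_4)$ directly to find a unique point, with parameter $(s,t)=(0,\varepsilon/2)$; writing down the tangent $2$-plane of $\Ss$ there and the tangent $2$-plane of $\Ss+\varepsilon e_4$ there (i.e.\ the tangent plane of $\Ss$ at parameter $(0,-\varepsilon/2)$) one checks they are complementary in $\R^4$ and that, with the orientations induced from the parametrization, the intersection sign is $+1$; the mirror model then gives $-1$. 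Comparing with the combinatorial sign convention for branch points (as in \cite{CarterSaito}, compatible with Remark~\ref{rem:sign}) identifies these $+1$'s with the $p$ positive and these $-1$'s with the $n$ negative branch points, and summing over all of them yields $e(\Ss)=p-n$.

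The step I expect to be the main obstacle is this last local computation — specifically, checking that the sign of the transverse intersection in the normal form agrees with the declared sign of the branch point. This is an elementary linear-algebra verification once the normal form is fixed, but it must be carried out with care about the three orientation inputs (of $S^4$, of the local sheet of $\Ss$, and of which sheet of $\mathbb S$ is broken); reversing any one of them flips the overall sign. A minor additional point to dispatch cleanly is the well-definedness of the signed intersection count when $\Ss$ is non-orientable, for which the observation in the first paragraph suffices.
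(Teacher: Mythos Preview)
Your proposal is correct and follows essentially the same strategy as the paper's sketch: both compute $e(\Ss)$ as the signed intersection of $\Ss$ with a nearby pushoff, observe that the intersections localize near branch points, and then verify the sign by a local computation. The paper carries out the local sign check via a figure (explicitly exhibiting bases $\{\vec z,\vec t\}$ and $\{\vec y,\vec t-\vec x\}$ for $T_q\Ss$ and $T_q\Ss'$ at a positive branch point and checking they give a positive basis of $\R^4$), whereas you do it algebraically with the Whitney-umbrella parametrization $(s,st,t^2,\pm t)$; and the paper leaves the precise pushoff unspecified (``push $\Ss$ off itself''), whereas you make the convenient explicit choice $\Ss'=\Ss+\varepsilon e_4$. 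These are cosmetic differences rather than a genuinely different route.
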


We sketch the proof of Proposition \ref{prop:branchsign} in at least as much detail as to convince the familiar reader that $e(\Ss)$ is $p-n$, rather than $n-p$.
\begin{proof}[Sketch]
Push $\Ss$ off itself and project the resulting parallel surface $\Ss'$ to $S^3$. The intersections between $\Ss$ and $\Ss'$ manifest in the projection near branch points of $\mathbb{S}$, as illustrated in Figure \ref{fig:branchsign}.
\end{proof}

\begin{figure}{\centering
\labellist
\pinlabel {$q$} at 225 73
\pinlabel {\textcolor{red}{$\vec{z}$}} at 205 105
\pinlabel {\textcolor{blue}{$\vec{y}$}} at 245 70
\pinlabel {\textcolor{red}{$\vec{t}$}} at 327 95
\pinlabel {\textcolor{blue}{$\vec{t}-\vec{x}$}} at 290 80
\pinlabel {$t$} at 260 -10
\endlabellist
\includegraphics[width=5in]{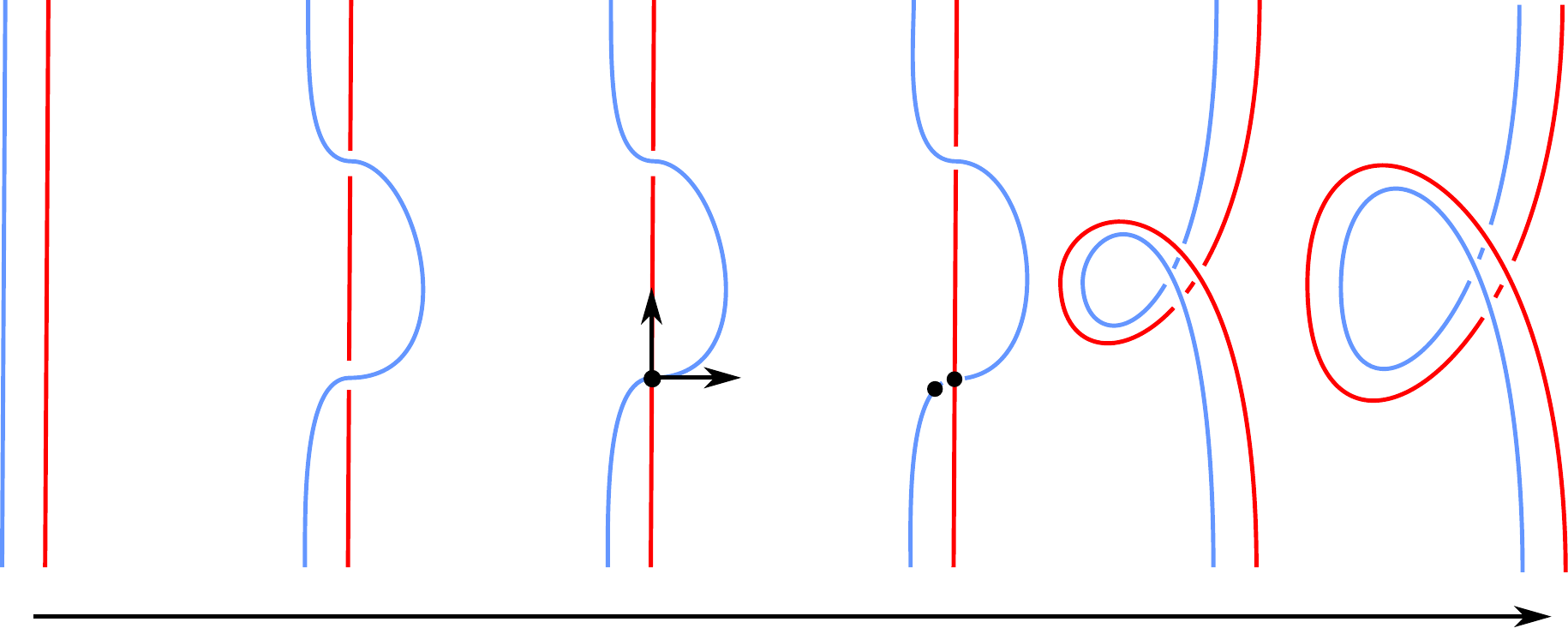}
\vspace{.2in}
\caption{A surface $\Ss$ and a parallel copy $\Ss'$ near a portion of $\Ss$ that projects to a positive branch point in a broken surface diagram. There is one intersection visible intersection point $q$ between $\Ss$ and $\Ss'$. We indicate $q$ as well as positive bases for $T_q(\Ss)$ and $T_q(\Ss')$, locally orienting $\Ss$ and inducing a local orientation on $\Ss'$. We find $T_q(\Ss)$ has positive basis $\{\vec{z},\vec{t}\}$ while $T_q(\Ss')$ has positive basis $\{\vec{y},\vec{t}-\vec{x}\}$. Since $\{\vec{z},\vec{t},\vec{y},\vec{t}-\vec{x}\}$ is a positive basis for $\R^4=\langle \vec{x},\vec{y},\vec{z},\vec{t}\rangle$, we see that $q$ is a point of positive intersection. Similarly, near a portion of $\Ss$ projecting to a negative branch point, we would find a negative intersection between $\Ss$ and $\Ss'$.
\label{fig:branchsign}
}}
\end{figure}

\begin{corollary}\label{cor:euler}
Let $\DD=(\DD_1,\DD_2,\DD_3)$ be a tri-plane diagram of a surface $\Ss\subset S^4$. Let $w_i$ be the writhe of the diagram $\DD_i\cup\overline{\DD}_{i+1}$. Then $e(\Ss)=w_1+w_2+w_3$.
\end{corollary}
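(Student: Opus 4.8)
The plan is to derive the formula directly from the broken-surface construction of Theorem~\ref{thm:getbroken}, the branch-point count of Proposition~\ref{prop:branchsign}, and the Reidemeister bookkeeping recorded in Remark~\ref{rem:sign}; essentially every step is routine, and the only thing demanding care is the chain of sign conventions.

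First I would fix, for each $i\in\Z_3$, an orientation of the unlink diagram $\DD_i\cup\overline{\DD}_{i+1}$ and take $w_i$ to be the sum of the signs of its crossings. (This is independent of the chosen orientation: the crossings between two distinct components sum to twice a vanishing linking number, and the self-crossing contributions of each component are orientation-independent, so $w_i$ is well defined even when $\Ss$ is non-orientable.) Next, following the proof of Theorem~\ref{thm:getbroken}, choose a sequence $\Rr_i=(R_1,\dots,R_{m_i})$ of Reidemeister moves carrying $\DD_i\cup\overline{\DD}_{i+1}$ to a crossingless unlink diagram, and assemble the broken surface diagram $\mathbb{S}$ for $\Ss$ out of the traces $\mathbb{T}_i$ and trivial cap disks. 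Product broken surfaces and cap disks contain no branch points, and gluing the three pieces along the half-planes $P_i$ introduces none, so every branch point of $\mathbb{S}$ lies in some $\mathbb{T}_i$; hence the branch-point count of $\mathbb{S}$ is the sum of those of the $\mathbb{T}_i$.

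The core of the argument is then a two-sided sign comparison. Write $a_i$ and $b_i$ for the number of positive, respectively negative, Reidemeister~I moves in $\Rr_i$. On one side, Remark~\ref{rem:sign} says $\mathbb{T}_i$ contributes $b_i$ positive and $a_i$ negative branch points, so summing over $i$ and invoking Proposition~\ref{prop:branchsign} gives $e(\Ss)=\sum_i b_i-\sum_i a_i$. On the other side, a positive Reidemeister~I move changes the writhe by $+1$, a negative one by $-1$, and Reidemeister~II and~III moves leave it unchanged; since $\Rr_i$ takes a diagram of writhe $w_i$ to one of writhe $0$, this forces $a_i-b_i=-w_i$, i.e.\ $w_i=b_i-a_i$. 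Combining the two identities yields $e(\Ss)=\sum_i(b_i-a_i)=w_1+w_2+w_3$.

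The step I would be most careful about is precisely this sign chain: that a \emph{positive} Reidemeister~I move is one adding a positive crossing \emph{or} deleting a negative one (hence $+1$ to the writhe), that by Remark~\ref{rem:sign} a positive RI move produces a \emph{negative} branch point, and that Proposition~\ref{prop:branchsign} reads $e(\Ss)=p-n$ rather than $n-p$. None of these is subtle individually, but reversing any one of them flips the sign of the conclusion, so I would sanity-check the composite against a known example, such as a tri-plane diagram of $P_+$, for which $e(P_+)=+2$.
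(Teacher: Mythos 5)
Your proposal is correct and follows essentially the same argument as the paper: build the broken surface diagram via Theorem~\ref{thm:getbroken}, observe that all branch points arise from Reidemeister~I moves in the uncrossing sequences, and combine the branch-point/writhe bookkeeping of Remark~\ref{rem:sign} with Proposition~\ref{prop:branchsign}; your sign chain ($w_i = b_i - a_i$ and $e(\Ss)=\sum_i(b_i-a_i)$) matches the paper's $n_i - p_i = w_i$ computation exactly. The only addition is your (correct) aside that $w_i$ is orientation-independent because the inter-component crossing signs of an unlink diagram sum to zero, a point the paper leaves implicit.
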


\begin{proof}
Let $\mathcal{R}_i$ denote a sequence of Reidemeister moves taking $\DD_i\cup\overline{\DD}_{i+1}$ to a zero-crossing diagram. Suppose $\mathcal{R}_i$ includes $p_i$ positive RI moves and $n_i$ negative RI moves. Since RII and RIII moves preserve writhe and a zero-crossing diagram has writhe zero, we must have $n_i-p_i=w_i$.

Let $\mathbb{S}$ be the broken surface diagram obtained from $\mathcal{R}_1,\mathcal{R}_2,\mathcal{R}_3$ as in the proof of Theorem \ref{thm:getbroken}. By Remark \ref{rem:sign}, within $X_i$, $\mathbb{S}\cap X_i=\widehat{\mathbb{S}}_i$ contains $n_i$ positive branch points and $p_i$ negative branch points. Moreover, there are no branch points of $\mathbb{S}$ in $X_i\cap X_j$ for any $i\neq j$. We conclude \[e(\Ss)=(n_1+n_2+n_3)-(p_1+p_2+p_3)=(n_1-p_1)+(n_2-p_2)+(n_3-p_3)=w_1+w_2+w_3.\]
\end{proof}

From Corollary \ref{cor:euler}, it is easy to conclude that orientable surfaces in $S^4$ have trivial normal bundle. This gives an alternative argument to the usual one (that oriented surfaces have zero self-intersection number since $H_2(S^4;\mathbb{Z})=0$).  Note that $\Ss$ is oriented if and only if the bridge points and arcs of any tri-plane diagram $\DD$ are coherently oriented; see Lemma 2.1 of~\cite{MTZ_graph}.
\begin{corollary}\label{cor:orient0}
Let $\Ss$ be an oriented surface in $S^4$. Then $e(\Ss)=0$.
\end{corollary}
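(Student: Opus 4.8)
The plan is to deduce this from Corollary~\ref{cor:euler}, using orientability to force the three writhes $w_1,w_2,w_3$ to cancel around the cycle.

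Since $\Ss$ is oriented, by Lemma~2.1 of~\cite{MTZ_graph} it admits a tri-plane diagram $\DD=(\DD_1,\DD_2,\DD_3)$ whose bridge points and arcs are coherently oriented; fix such a diagram and let $o$ be the resulting orientation. Then $o$ restricts to an orientation of each tangle diagram $\DD_i$, and I write $w(\DD_i)$ for the writhe of $\DD_i$ with respect to $o$, i.e.\ the sum of the signs of its crossings. Now observe that, when one forms the link diagram $\DD_i\cup\overline{\DD}_{i+1}$ in $S^2$, its crossings are exactly the crossings of $\DD_i$ together with the crossings of $\overline{\DD}_{i+1}$: no new crossings are created by the gluing, which occurs along the circle of bridge points. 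Because $o$ is coherent, the orientation it induces on the $\DD_i$-part and the (mirrored) orientation it induces on the $\overline{\DD}_{i+1}$-part agree at each bridge point, so $\DD_i\cup\overline{\DD}_{i+1}$ is a consistently oriented diagram and $w_i=w(\DD_i)+w(\overline{\DD}_{i+1})$. Since passing to the mirror image reverses the sign of every crossing, $w(\overline{\DD}_{i+1})=-w(\DD_{i+1})$, hence $w_i=w(\DD_i)-w(\DD_{i+1})$ for each $i\in\Z_3$. Summing cyclically, $w_1+w_2+w_3=\bigl(w(\DD_1)-w(\DD_2)\bigr)+\bigl(w(\DD_2)-w(\DD_3)\bigr)+\bigl(w(\DD_3)-w(\DD_1)\bigr)=0$, and Corollary~\ref{cor:euler} then gives $e(\Ss)=w_1+w_2+w_3=0$.

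The only real obstacle is the sign bookkeeping in the middle step: one must check that the coherent orientation makes the two tangle orientations glue across the bridge points with no compensating reversal, so that the mirror image genuinely contributes a minus sign. Were such a reversal forced, one would instead obtain $w_i=w(\DD_i)+w(\DD_{i+1})$, and the telescoping would collapse to $2\bigl(w(\DD_1)+w(\DD_2)+w(\DD_3)\bigr)$ rather than to $0$. This is exactly where coherence of $o$ (equivalently, orientability of $\Ss$) is used, and it also explains why one needs a single global orientation rather than just an orientation of each $\DD_i\cup\overline{\DD}_{i+1}$ separately: only a global choice guarantees that the quantity $w(\DD_i)$ appearing in $w_{i-1}$ agrees with the one appearing in $w_i$. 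I would therefore spell this compatibility out explicitly, tracking orientations through the description of oriented tri-plane diagrams in~\cite{MTZ_graph}.
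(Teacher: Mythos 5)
Your argument is correct and is essentially identical to the paper's: both deduce the result from Corollary~\ref{cor:euler} by using a coherently oriented tri-plane diagram to write $w_i=w(\DD_i)-w(\DD_{i+1})$ and then telescoping the cyclic sum to zero. The extra care you take with the sign bookkeeping at the bridge points is exactly the point the paper leaves implicit, and it checks out.
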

\begin{proof}
Let $(\DD_1,\DD_2,\DD_3)$ be an oriented tri-plane diagram for $\Ss$. Since the $\DD_i$ are oriented, we have $w(\DD_{i}\cup\overline\DD_{i+1})=w(\DD_i)-w(\DD_{i+1})$, where $\DD_i\cup\overline\DD_{i+1}$. By Corollary \ref{cor:euler}, 
\begin{align*}
 e(\Ss)&=w(\DD_1\cup\overline\DD_2)+w(\DD_2\cup\overline\DD_3)+w(\DD_3\cup\overline\DD_1)\\
&=(w(\DD_1)-w(\DD_2))+(w(\DD_2)-w(\DD_3))+(w(\DD_3)-w(\DD_1))\\
&=0.
\end{align*}
\end{proof}

It is clear that a nonorientable surface in $S^4$ has even self-intersection number, since $H_2(S^4;\mathbb{Z}/2\mathbb{Z})=0$. But with a little more work, using the above argument one can also compute the Euler number of a nonorientable surface mod 4. This corollary is sometimes called \emph{Whitney congruence}. The following corollary was originally proved by Massey~\cite{massey}.

\begin{corollary}\label{cor:nonorientable}
Let $\Ss$ be a surface in $S^4$. Then $e(\Ss)\equiv2\chi(\Ss)\pmod{4}$.
\end{corollary}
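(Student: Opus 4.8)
The plan is to leverage Corollary~\ref{cor:euler} together with a careful bookkeeping of the writhes of the three pairings $\DD_i \cup \overline{\DD}_{i+1}$ modulo $4$, relating the total $w_1 + w_2 + w_3$ to the Euler characteristic $\chi(\Ss)$ via the combinatorics of a tri-plane diagram. First I would set up notation: let $\DD = (\DD_1, \DD_2, \DD_3)$ be a $(b; \mathbf{c})$--tri-plane diagram for $\Ss$, so each $\DD_i$ is a $b$--strand trivial tangle and $\DD_i \cup \overline{\DD}_{i+1}$ is a $c_i$--component unlink. The key identity I would aim for is that $\chi(\Ss) = c_1 + c_2 + c_3 - b$; this is the standard Euler characteristic formula for a bridge trisection (each trivial disk system $\D_i$ contributes $c_i$ disks, and the bridge sphere identifications are accounted for by the $-b$), and it is presumably recorded in \cite{MeiZup_bridge1}. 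So the target congruence $e(\Ss) \equiv 2\chi(\Ss) \pmod 4$ becomes $w_1 + w_2 + w_3 \equiv 2(c_1 + c_2 + c_3 - b) \pmod 4$.

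Next I would reduce this to a statement about each pairing individually. The natural claim is that for any $b$--strand trivial tangle diagrams $\DD_i$ and $\DD_{i+1}$ whose union $\DD_i \cup \overline{\DD}_{i+1}$ is a $c_i$--component unlink, the writhe satisfies $w(\DD_i \cup \overline{\DD}_{i+1}) \equiv 2c_i - 2\varepsilon_i \pmod 4$ for some quantity $\varepsilon_i$ that sums to $b$ over $i \in \Z_3$. Here is where the main idea enters: an $n$--crossing diagram of an $m$--component unlink has writhe congruent to $2(m - n_{\mathrm{comp}}) \pmod 4$ is too crude; instead I would use the fact that a diagram of an unlink, when one tracks a full sequence of Reidemeister moves to the trivial diagram, has $n_i - p_i = w_i$ (as in the proof of Corollary~\ref{cor:euler}) and then analyze the parity of the number of RI moves via the relationship between the writhe of a knot/link diagram and the number of Seifert circles or the ``self-linking'' data. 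A cleaner route: for a single $c$--component unlink diagram $U$, the writhe $w(U)$ is congruent mod $4$ to $2(c - s(U))$ where $s(U)$ is the number of Seifert circles, because each RII/RIII move preserves $w - 2s \pmod 4$ wait — RII changes $s$ by $0$ or $\pm 2$ and $w$ by $0$, RI changes $s$ by... this needs the right invariant. I would instead recall the classical fact that for any link diagram $D$, $w(D) + s(D) \equiv (\text{number of components}) \pmod 2$ and push to mod $4$ using the writhe-framing interpretation.

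Concretely, the cleanest approach: apply Corollary~\ref{cor:euler}, then observe that $w_i = n_i - p_i$ where $p_i, n_i$ count signed RI moves in a reduction $\Rr_i$ of $\DD_i \cup \overline{\DD}_{i+1}$ to the trivial $c_i$--component diagram. The trace broken surface $\widehat{\mathbb{S}}_i$ has, as its underlying surface, a system of $c_i$ disks in $X_i$, each of which is a disk whose boundary is a component of $\DD_i \cup \overline{\DD}_{i+1}$; the number of branch points on such a disk is odd if and only if... — and here I would invoke the fact that an immersed disk in $S^3$ bounding an unknot, obtained as the trace of Reidemeister moves, has an even number of branch points when the two ends of the corresponding strand of the tangle attach to the same disk of $\D_i$, and the total count of ``odd'' disks across all three sectors is exactly $b$ (one per strand pair, or per bridge point). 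This is essentially a mod-$2$ reduction of the mod-$4$ content of Whitney's formula. I would make this precise by a local model computation near each branch point and a global count of how the $b$ strands of each tangle connect up the $c_i$ disks. The hard part will be pinning down the exact global combinatorial identity — showing that $\sum_i (c_i - \varepsilon_i) \equiv \chi(\Ss) + b \equiv c_1 + c_2 + c_3 \pmod 2$ forces $\sum \varepsilon_i \equiv b$, and more delicately that the mod-$4$ (not just mod-$2$) contributions assemble correctly — which requires the Euler-characteristic formula for bridge trisections and a careful sign/parity analysis of branch points sheet-by-sheet, rather than any single slick observation.
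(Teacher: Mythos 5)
Your proposal has a genuine gap, and the per-sector strategy it is built on cannot work as stated. You aim for a congruence of the form $w(\DD_i\cup\overline{\DD}_{i+1})\equiv 2c_i-2\varepsilon_i\pmod 4$ for each pairing separately, but the individual writhes $w_i$ are not even constrained modulo $2$: an interior Reidemeister~I kink in the tangle $\DD_i$ adds $\pm1$ to each of $w_i$ and $w_{i-1}$ (since a crossing of $\DD_i$ appears in exactly the two pairings $\DD_i\cup\overline{\DD}_{i+1}$ and $\DD_{i-1}\cup\overline{\DD}_i$), so a single $w_i$ can be odd. No per-sector identity of the proposed shape exists; only the \emph{sum} $w_1+w_2+w_3$ is controlled, and the mechanism is exactly the double-counting you brush past: each crossing contributes its sign to precisely two of the three pairings, hence contributes $0$ or $\pm2$ to the total, and a crossing change alters the total by $0$ or $\pm4$. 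Your proposal also never closes: you cycle through candidate invariants (Seifert circles, parities of RI moves, branch points per disk) without settling on one, and you explicitly defer ``the hard part'' of assembling the mod-$4$ count. That deferred part is the entire content of the statement.

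The paper's proof is short and avoids all of this. It first verifies the congruence directly for unknotted surfaces ($e(\#_k\mathbb{RP}^2)=2(a-b)$ with $a+b=k$, which is $\equiv 2(2-k)=2\chi\pmod4$). It then observes, as above, that $w_1+w_2+w_3\pmod 4$ is invariant under crossing changes in the tangles, and invokes Corollary~1.2 of~\cite{MTZ_graph}, which says that some sequence of crossing changes converts any tri-plane diagram into one for an unknotted surface $\Ss'$ (necessarily with the same $(b;\mathbf{c})$ and hence the same Euler characteristic). Corollary~\ref{cor:euler} then gives $e(\Ss)=w_1+w_2+w_3\equiv e(\Ss')\equiv 2\chi(\Ss')=2\chi(\Ss)\pmod 4$. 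If you want to salvage your approach, the crossing-change invariance of the total writhe mod $4$ is the ``single slick observation'' you were looking for; the reduction to the unknotted case replaces the delicate sheet-by-sheet branch point bookkeeping you anticipated.
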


\begin{proof}[Proof of Corollary \ref{cor:nonorientable}]

The two unknotted $\mathbb{RP}^2$s have Euler numbers $+2$ and $-2$. Therefore, if $\Ss\cong\#_k\mathbb{RP}^2$ is an unknotted surface in $S^4$, $e(\Ss)=2(a-b)$ for some $a,b\in\{0,\ldots,k\}$ with $a+b=k$. Therefore, the corollary is true for unknotted surfaces.

%Recently, the second and fourth author, along with Abigail Thompson, proved that any tri-plane diagram can be transformed into a tri-plane diagram of an unknotted surface via crossing changes \cite[Corollary 1.2]{MTZ_graph}. With this in mind, let $(\DD_1,\DD_2,\DD_3)$ be a tri-plane diagram for $\Ss$.

Consider the effect of a crossing change at crossing $c$ in $\DD_i$ on $w_1$, $w_2$, and $w_3$. Since $w_{i+1}$ is the writhe of $\DD_{i+1}\cup\overline{\DD}_{i-1}$, $w_{i+1}$ remains constant. However, each of $w_i$ and $w_{i-1}$ change by $+2$ or $-2$, with sign depending on the sign of $c$ in $\DD_i\cup\overline\DD_{i+1}$ and $\DD_{i-1}\cup\overline\DD_{i}$. If $\Ss$ is not orientable, then $c$ may have the same or opposite signs in these two link diagrams. Therefore, the crossing change may preserve $w_1+w_2+w_3$, or increase or decrease the total by four. We conclude that $(w_1+w_2+w_3)\pmod{4}$ is preserved by the crossing change.

Now by \cite[Corollary~1.2]{MTZ_graph}, there exists a sequence of crossing changes transforming the triple $(\DD_1,\DD_2,\DD_3)$ into a tri-plane diagram for an unknotted surface $\Ss'$. Since $\Ss'$ is unknotted, $e(\Ss')\equiv 2\chi(\Ss')\pmod{4}$. By Corollary \ref{cor:euler}, we conclude that
$$e(\Ss)=w_1+w_2+w_3 \equiv e(\Ss')\equiv 2\chi(\Ss')=2\chi(\Ss)\pmod{4}.$$
\end{proof}

Finally, we refine Corollary \ref{cor:nonorientable} to the more general Whitney--Massey Theorem~\cite{massey}.  One of the main ingredients is the following Theorem of Viro.

\begin{theorem}\label{thm:viro}\cite{viro}
If $\Ss$ is a surface embedded in $S^4$ and $X^{\Ss}$ is the two-fold cover of $S^4$ branched along $\Ss$, then
\[ -e(\Ss) = 2 \sigma(X^{\Ss}).\]
\end{theorem}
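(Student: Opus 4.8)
The plan is to realize $X^\Ss$ concretely and then compute its signature. In the spirit of this paper, the natural first attempt passes through the bridge trisection: given a $(b;\mathbf c)$--tri-plane diagram $\DD$ for $\Ss$, take fiberwise double branched covers of the pieces of the associated bridge trisection. Each $(X_i,\D_i)=(B^4,\,c_i\text{ trivial disks})$ is covered by the $4$--dimensional $1$--handlebody $\natural_{c_i-1}(S^1\times B^3)$, each $(H_i,\T_i)=(B^3,\,b\text{ trivial arcs})$ by a genus $b-1$ handlebody $\widetilde H_i$, and $(\Sigma,\Sigma\cap\Ss)$ by a genus $b-1$ surface $\widetilde\Sigma$, so $X^\Ss$ acquires a $(b-1;\,c_1-1,c_2-1,c_3-1)$--trisection (cf.\ the branched-cover construction of~\cite{MeiZup_bridge2}). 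Since each covering piece has vanishing intersection form, Novikov additivity together with Wall's non-additivity correction expresses $\sigma(X^\Ss)$, up to a universal normalization, as the Maslov triple index of the three Lagrangian subspaces $\ker\big(H_1(\widetilde\Sigma;\R)\to H_1(\widetilde H_i;\R)\big)$. The obstacle in this route is to evaluate that index and show it equals $-\tfrac12(w_1+w_2+w_3)$, after which Corollary~\ref{cor:euler} finishes the proof; I would attempt this by a crossing-by-crossing comparison, showing that a crossing of sign $\epsilon$ in $\DD_i\cup\overline\DD_{i+1}$ contributes $-\tfrac12\epsilon$ to the index. I expect this local symplectic bookkeeping to be the genuine difficulty of this approach.

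The approach I would actually complete sidesteps the trisection and invokes the Atiyah--Singer $G$--signature theorem for the covering involution $\tau$ of $X^\Ss$, whose fixed-point set is a surface $\widetilde\Ss$ mapping homeomorphically onto $\Ss$. The key steps are: (1) since $H^2(S^4;\R)=0$ and rational cohomology of the quotient is the $\tau$--invariant cohomology, all of $H^2(X^\Ss;\R)$ is anti-invariant, so $\sigma(X^\Ss)=-\mathrm{Sign}(\tau,X^\Ss)$ for the equivariant signature $\mathrm{Sign}(\tau,-)$; (2) the $G$--signature theorem computes $\mathrm{Sign}(\tau,X^\Ss)$ as a contribution localized along the fixed surface, equal to its normal Euler number $e(\widetilde\Ss)$; (3) because the branched cover is modeled fiberwise on $z\mapsto z^2$ in the normal disk bundle, $\nu(\widetilde\Ss)$ is a ``square root'' of $\nu(\Ss)$, whence $e(\widetilde\Ss)=\tfrac12 e(\Ss)$ (meaningful since $e(\Ss)$ is even by Corollary~\ref{cor:nonorientable}). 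Combining gives $\sigma(X^\Ss)=-e(\widetilde\Ss)=-\tfrac12 e(\Ss)$, i.e.\ $-e(\Ss)=2\sigma(X^\Ss)$.

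The subtle point here --- and where I would be most careful --- is the interaction of steps (2) and (3): when $\widetilde\Ss$ is non-orientable one must make precise sense of the self-intersection appearing in the fixed-point contribution of the $G$--signature formula, and verify that the resulting sign is consistent with the conventions of Remark~\ref{rmk:sign}; I would calibrate this against the model case $\Ss=P_-$, for which $X^{P_-}=\CP^2$ has $\sigma=1$ while $e(P_-)=-2$. Finally, note that once Viro's formula is established, Corollary~\ref{cor:euler} turns $\sigma(X^\Ss)$ into a quantity readable directly off a tri-plane diagram, which is precisely the input needed for the proof of Theorem~\ref{thm:massey}.
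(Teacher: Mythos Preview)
The paper does not give its own proof of this theorem; it is stated with attribution to Viro and invoked as an external ingredient. So there is no in-paper argument to match against, and your $G$--signature argument is the standard route (and is correct as sketched, modulo the sign bookkeeping you flag).

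That said, it is worth noticing that the paper's proof of Theorem~\ref{thm:massey} actually \emph{re-derives} the identity $e(\Ss)=-2\sigma(X^{\Ss})$ independently, by trisection-theoretic means: it applies Wall non-additivity to both the lifted trisection of $X^{\Ss}$ and to the double branched cover along an auxiliary unknotted surface $\Ss'$ built from checkerboard surfaces for the three pairwise diagrams, then uses Gordon--Litherland to identify the piecewise signatures with Goeritz signatures and matches the correction terms. This is precisely the kind of ``local symplectic bookkeeping'' you anticipated in your first approach, but packaged via Gordon--Litherland forms and a comparison surface rather than via a direct Maslov triple-index computation. So your abandoned first route is in fact the one closer to the paper's methods, and it can be completed: the missing step you identified (evaluating the Wall correction as $-\tfrac12\sum w_i$) is exactly what Claims~\ref{claimsumgoeritz} and~\ref{claimes} accomplish. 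Your completed $G$--signature route, by contrast, is shorter and more classical but imports heavier machinery and leaves the tri-plane diagram entirely; each has its place, but only the first is ``new'' in the sense this paper cares about.
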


We can now proceed with the proof, which also makes use of work by Gordon and Litherland~\cite{gordonlitherland}.

\begin{theorem}\label{thm:massey}
Let $\Ss$ be a closed, connected, non-orientable surface in $S^4$, and set $\chi:=\chi(\Ss)$. Then the Euler number $e(\Ss)$ of $\Ss$ is in the set \[\{2\chi-4,2\chi,2\chi+4,\ldots,-2\chi-4,-2\chi,-2\chi+4\}.\]

\end{theorem}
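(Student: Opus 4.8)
The plan is to bound $e(\Ss)$ from two sides: Corollary~\ref{cor:nonorientable} (Whitney congruence) already forces $e(\Ss) \equiv 2\chi \pmod 4$, so it remains only to establish the inequality $2\chi - 4 \le e(\Ss) \le -2\chi + 4$ (note $\chi \le 0$ since $\Ss$ is connected and non-orientable and not $S^2$; if $\Ss \cong \RP^2$ then $\chi = 1$ and the asserted set is $\{-2, 2\}$, matching $P_\pm$, so I would handle that base case separately). For the general inequality I would invoke Viro's Theorem~\ref{thm:viro}, which gives $e(\Ss) = -2\sigma(X^\Ss)$ where $X^\Ss$ is the double cover of $S^4$ branched along $\Ss$. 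Thus the problem reduces to bounding the signature of $X^\Ss$ in terms of $\chi$.

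The next step is to control $\sigma(X^\Ss)$ via the second Betti number: $|\sigma(X^\Ss)| \le b_2(X^\Ss)$. Here I would bring in the Gordon--Litherland machinery~\cite{gordonlitherland}: a non-orientable spanning surface (or, branched-cover, Goeritz-type) computation shows that $H_2(X^\Ss)$ has rank equal to the first Betti number of $\Ss$, which is $1 - \chi$ (since $b_1(\Ss) = 2 - \chi - b_0 = 1 - \chi$ for a connected non-orientable surface; more precisely $b_1 = 2 - \chi$ for closed orientable and $1-\chi$ would need care — I would use that a closed non-orientable surface with Euler characteristic $\chi$ is a connected sum of $2 - \chi$ projective planes, so $b_1 = 2-\chi$ over $\Q$... let me instead simply cite that $\dim H_1(\Ss;\Q) = 2 - \chi$ after killing torsion appropriately). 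From this one gets $b_2(X^\Ss) \le 2 - \chi$, hence $|\sigma(X^\Ss)| \le 2 - \chi$, so $|e(\Ss)| = 2|\sigma(X^\Ss)| \le 2(2-\chi) = -2\chi + 4$. Combined with the mod-$4$ congruence, this pins $e(\Ss)$ into the stated arithmetic progression.

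The main obstacle, and the step requiring the most care, is the homological input: getting the sharp bound $b_2(X^\Ss) \le 2 - \chi$ (equivalently, that the signature cannot exceed the "expected" rank coming from $\Ss$). The naive Mayer--Vietoris or transfer argument for the branched double cover gives $b_2(X^\Ss)$ in terms of $b_1(\Ss)$ and the linking form, and one has to be careful about torsion and about whether the cover is a rational homology $S^4$ off the branch locus. This is exactly where \cite{gordonlitherland} enters: their description of the intersection form of the branched cover in terms of a Gordon--Litherland-type pairing on $H_1$ of a spanning surface of $\Ss$ (or directly on $H_1(\Ss)$) gives the rank bound cleanly, and their result also reproves Viro's formula as the "total" invariant, so I would lean on it for both the rank and to double-check signs. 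The remaining pieces — Whitney congruence from Corollary~\ref{cor:nonorientable}, Viro's theorem as a black box, and the $\RP^2$ base case — are routine by comparison.
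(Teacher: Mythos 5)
Your proposal is correct in outline, but it follows the classical route rather than the paper's. You take Viro's formula $e(\Ss)=-2\sigma(X^{\Ss})$ (Theorem~\ref{thm:viro}) as a black box and combine it with the congruence of Corollary~\ref{cor:nonorientable} and the bound $|\sigma(X^{\Ss})|\le b_2(X^{\Ss})=2-\chi$. The paper states Viro's theorem but never actually invokes it: the bulk of its argument is a trisection-theoretic re-derivation of the identity $e(\Ss)=-2\sigma(X^{\Ss})$. Concretely, the genus-zero trisection of $S^4$ lifts to a trisection of $X^{\Ss}$ whose sectors are $4$--dimensional $1$--handlebodies, so Wall non-additivity reduces $\sigma(X^{\Ss})$ to the signature of a neighborhood of the central spine; comparing with the branched cover over a pushed-in union of checkerboard surfaces $F_1\cup F_2\cup F_3$ (an unknotted surface with $e=0$, whose branched cover is $\#_n\CP^2\#_n\overline{\CP}^2$) and using the Gordon--Litherland relations between the Goeritz form, the signature, and the type~II crossing count identifies the correction term with $e(\Ss)/2=\tfrac12\sum_i w(\DD_i\cup\overline{\DD}_{i+1})$ via Corollary~\ref{cor:euler}. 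Your route is shorter; the paper's buys a self-contained diagrammatic proof of Viro's formula, which is the point of the section. The final step $|\sigma(X^{\Ss})|\le b_2(X^{\Ss})=2-\chi$ is the same in both.

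One wobble to fix: your justification of $b_2(X^{\Ss})\le 2-\chi$ is wrong as written. For a closed connected non-orientable surface $\dim H_1(\Ss;\Q)=1-\chi$, not $2-\chi$; the number you need is $\dim H_1(\Ss;\Z/2\Z)=2-\chi$ (for $\#_k\RP^2$ this is $k=2-\chi$), or equivalently one computes $\chi(X^{\Ss})=2\chi(S^4)-\chi(\Ss)=4-\chi$ and checks $b_1(X^{\Ss})=b_3(X^{\Ss})=0$, whence $b_2=2-\chi$. Moreover, this rank statement is not a Gordon--Litherland computation (their pairing lives on spanning surfaces of links in $S^3$ and governs branched covers of $B^4$); it is the standard homology calculation for double covers of $S^4$ branched along a closed surface. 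Since the paper itself asserts $b_2(X^S)=2-\chi$ without proof, this is a fixable slip rather than a fatal gap, but as stated your Betti number is off by one and in the wrong coefficient ring.
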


\begin{proof}
Using Corollary \ref{cor:nonorientable}, we need only prove that $|e(\Ss)|\le 4-2\chi$. Let $\DD=(\DD_1,\DD_2,\DD_3)$ be a tri-plane diagram for $\Ss$. Let $\widehat\DD_i = \DD_i\cup\overline\DD_{i+1}$.

Let $X^S$ denote the 2--fold cover of $S^4$ branched along $\Ss$. The genus zero trisection of $S^4$ lifts to a trisection $\mathbb{T}=(X^S_1,X^S_2,X^S_3)$ of $X^S$, with $X^S_i$ covering $X_i$. Let $H^S_i=X^S_i\cap X^S_{i+1}$. By Wall \cite{wall}, \[\sigma(X^S)=\Sigma_i\sigma(X^S_i)+\sigma(\nu(H^S_1\cup H^S_2\cup H^S_3)).\] Each $X^S_i$ is a 4--dimensional 1--handlebody, so has vanishing signature. Therefore, 
\[\sigma(X^S)=\sigma(\nu(H^S_1\cup H^S_2\cup H^S_3)).\]

Now fix checkerboard surfaces $F_1$, $F_2$, and $F_3$ for $\widehat\DD_1 = \DD_1\cup\overline\DD_2$, $\widehat\DD_2 = \DD_2\cup\overline\DD_3$, and $\widehat\DD_3 = \DD_3\cup\overline\DD_1$ (respectively) so that the surfaces $F_i$ and $F_{i+1}$ agree in $\DD_{i+1}$.  See Figure \ref{fig:checkerboard}.

\begin{figure}{\centering
\labellist
\pinlabel {$F_1$} at 50 130
\pinlabel {$F_2$} at 160 105
\pinlabel {$F_3$} at 270 110
\pinlabel {$\eta_1=1$} at 50 0
\pinlabel {$\eta_2=-1$} at 160 0
\pinlabel {$\eta_3=0$} at 270 -15
\endlabellist
\vspace{.2in}
\includegraphics[width=4in]{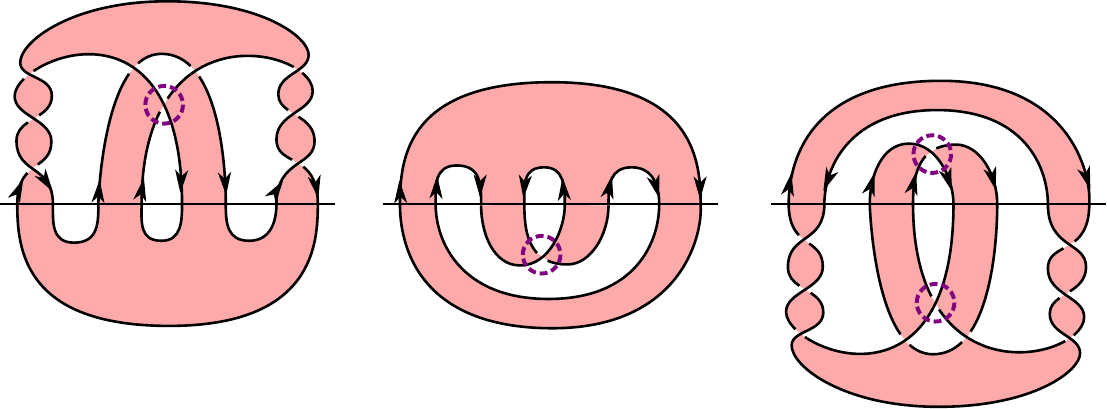}
\vspace{.2in}
\caption{Checkerboard surfaces $F_1,F_2,F_3$ for $\widehat{\DD}_1,\widehat{\DD}_2,\widehat{\DD}_3$, respectively. We choose the surfaces so that $F_i$ and $F_{i+1}$ agree in $\DD_{i+1}$. We arbitrarily choose some orientations of each $\widehat{\DD}_i$ (indicated by arrows) and then use dashed circles to indicate the type II crossings (see Figure \ref{fig:type12}) of $F_i$ given these orientations.}
\label{fig:checkerboard}
}
\end{figure}

Let $\Ss'$ be a surface obtained by gluing together $F_1,F_2,F_3$ along common boundary, after pushing the interior of $F_i$ slightly into $X_i$. 

\begin{claim}\label{claimes0}The surface $\Ss'$ is unknotted with $e(\Ss')=0$.
\end{claim}
\begin{proof}
Let $F'_i$ denote the copy of $F_i$ pushed into $B^4$, so $\Ss'=F'_1\cup F'_2\cup F'_3$.
Let $H$ be the 3--manifold formed as the union of the traces of the three isotopies pushing the $F_i$ into $B^4$.
Then, $H$ is a 3--dimensional neighborhood of a union of three 1--dimensional spines of the $F_i$ (that are chosen to agree at $\Sigma$).
In other words, $H$ is a handlebody, though it may be non-orientable.
In any event, $\Ss'$ is unknotted with $e(\Ss')=0$, since it bounds a handlebody in $S^4$.
\end{proof}

Let $X^F_i$ denote the 2--fold covering of $X_i$ branched along $F'_i$. Let $G_i$ denote the Gordon-Litherland form associated to $F_i$~\cite{gordonlitherland}.

\begin{claim}\label{claimsumgoeritz}
We have \[\sigma(X^F_i)=\sigma(G_i).\]
\end{claim}
\begin{proof}
Gordon--Litherland \cite{gordonlitherland} showed that the quantity $\sigma(X^F_i)+e(F'_i)/2$ is independent of the choice of checkerboard surface $F_i$, up to Reidemeister moves of the oriented diagram $\widehat{\DD}_i$.
Since $\widehat{\DD}_i$ is a diagram of an unlink, we conclude that $\sigma(X^F_i)+e(F'_i)/2=0$. By Gordon--Litherland, we also have $\sigma(G_i)+e(F_i)/2=0$, yielding the desired equality.
\end{proof}

\begin{claim}\label{claimes}
We have
$$e(\Ss)=
2(\sigma(G_1)+\sigma(G_2)+\sigma(G_3)).$$
\end{claim}
\begin{proof}
We remind the reader of the following theorem of Gordon--Litherland \cite{gordonlitherland}: if $G$ is a Goeritz matrix for a diagram of a link $L$ associated to a checkerboard surface $F$, then $\sigma(L)=\sigma(G)-\eta$, where $\eta$ is a sum of signs over type II crossings in $F$ (see Figure \ref{fig:type12}). Since each $\widehat{\DD}_i$ is a diagram for an unlink (which has signature zero), we conclude $\sigma(G_i)=\eta_i$, where $\eta_i$ is the corresponding sum of signs over type II crossings in $F_i$.
\begin{observation*}
A crossing $c$ in $\DD_i$ has the same sign in $\widehat{\DD}_i$ as it does in $\widehat{\DD}_{i-1}$ if and only if it is type I in one of $F_i$ or $F_{i-1}$ and type II in the other.\end{observation*}
If $c$ has different signs in $\widehat{\DD}_i$ and $\widehat{\DD}_{i-1}$, then it does not contribute to $e(\Ss)=\Sigma_i w(\widehat{\DD}_i)$. 
If $c$ has the same sign in each of $\widehat{\DD}_i,\widehat{\DD}_{i-1}$, then $c$ contributes twice that sign and is type II in exactly one of $F_i,F_{i-1}$ by the above observation. We conclude the claim.
\end{proof}

\begin{figure}{\centering
\labellist
\pinlabel {Type I} at 15 -10
\pinlabel {Type II} at 132 -10
\endlabellist
\includegraphics[width=2in]{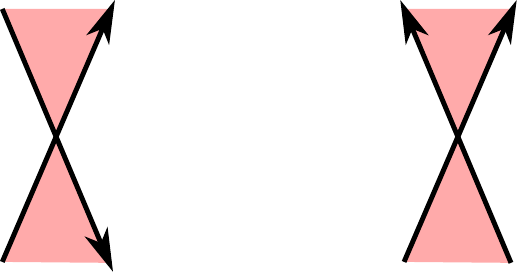}
\vspace{.2in}
\caption{A crossing $c$ in an oriented link diagram $\widehat\DD$. The shaded region indicates a checkerboard surface $F$ for $\widehat\DD$. We say $c$ is type I if $F$ can be locally oriented near $c$ to induce the correct orientation on $\partial F$. Otherwise, $c$ is type II. In this picture, it does not matter which strand contains the overcrossing; type is independent of sign. }
\label{fig:type12}
}
\end{figure}

Let $X^{\Ss'}$ be the 2--fold cover of $S^4$ branched along $\Ss'$. The splitting~$S^4=X_1\cup X_2\cup X_3$ lifts to a splitting (not a trisection) $X^{\Ss'}=X^{F}_1\cup X^{F}_2\cup X^{F}_3$. Let $H^{\Ss'}_i=X^{F}_i\cap X^{F}_{i+1}$. Again by Wall \cite{wall}, we have \[\sigma(X^{\Ss'})=\Sigma_i\sigma(X^{F}_i)+\sigma(\nu(H^{\Ss'}_1\cup H^{\Ss'}_2\cup H^{\Ss'}_3)).\] 
By Claim \ref{claimsumgoeritz}, $\sigma(X^F_i)=\sigma(G_i)$. 
Moreover, note that
$$\nu(H^{\Ss'}_1\cup H^{\Ss'}_2\cup H^{\Ss'}_3)\cong \nu(H^{S}_1\cup H^{S}_2\cup H^{S}_3).$$
We conclude 
$$\sigma(X^{\Ss'})=\sigma(X^S)+\Sigma_i\sigma(G_i).$$

By Claim \ref{claimes}, $\Sigma_i\sigma_i(G_i)=e(\Ss)/2$. Moreover, since $\Ss'$ is an unknotted surface with $e(\Ss')=0$ (Claim \ref{claimes0}), $X^{\Ss'}\cong \#_n\CP^2\#_n\overline{\CP}^2$ for some $n\ge 0$. Therefore, $\sigma(X^{\Ss'})=0$, so this becomes
$$e(\Ss)=-2\sigma(X^S).$$

Finally, we have $|\sigma(X^S)|\le b_2(X^S)=2-\chi$. Thus, we obtain our desired inequality: \[|e(\Ss)|\le 4-2\chi.\]

\end{proof}

\subsection{The triple point number of a bridge trisection}\label{subsec:triple_point}

Recall from Remark \ref{rem:sign} that given a tri-plane diagram $(\DD_1,\DD_2,\DD_3)$ of a surface $\Ss$, we may produce a broken surface diagram of $\Ss$ with $t_1+t_2+t_3$ triple points, where $t_i$ is the number of RIII moves in some sequence of Reidemeister moves transforming $\DD_i\cup\overline\DD_{i+1}$ into a crossingless diagram. This allows us to define the triple point number of a bridge trisection as follows.

\begin{definition}
Let $\widehat\DD$ be an unlink diagram. We say a sequence of Reidemeister moves applied to $\widehat\DD$ is an \emph{uncrossing sequence for $\widehat\DD$} if the end result is a crossingless diagram. We define
$$t(\widehat\DD)=\text{the minimum number of RIII moves in any uncrossing sequence for $\widehat\DD$.}$$

\end{definition}
\begin{definition}
Let $\DD=(\DD_1,\DD_2,\DD_3)$ be a tri-plane diagram of a bridge trisection $\TT$ of a knotted surface $\Ss$. Define $t(\DD)=t(\widehat\DD_1)+t(\widehat\DD_2)+t(\widehat\DD_3)$, and define $t(\TT)$ to be the minimal value of $t(\DD)$, taken over all tri-plane diagrams $\DD$ of $\TT$. This is called the {\em triple point number} of $\TT$.
\end{definition}

By construction, this triple point number is an invariant of the bridge trisection. By Remark \ref{rem:sign}, we have $t(\TT)\geq t(\Ss)$, where $t(\Ss)$ is the usual triple point number of the surface $\Ss$ (i.e. the minimum number of triple points in any broken surface diagram of $\Ss$) for any bridge trisection $\TT$ of a surface $\Ss$.

\begin{questions}\leavevmode
\begin{enumerate}
\item Given a surface $\Ss$, is there a bridge trisection $\TT$ for $\Ss$ with $t(\TT)=t(\Ss)$? 
\item Does there exist a surface $\Ss$ with bridge trisection $\TT$ so that $t(\TT)>t(\Ss)$?
\item Does there exist a bridge trisection $\TT$ with $\sS$ an unknotted 2-sphere so that $t(\TT)>0$?
\end{enumerate}
\end{questions}

By construction, ribbon surfaces (defined below) always have triple point number zero. In the next subsection, we show that every ribbon surface has a ribbon bridge trisection, and that ribbon bridge trisections always have triple point number zero, thus recovering this fact. 

\subsection{Ribbon bridge trisections}\label{subsec:ribbon}

In this subsection we define bridge trisections for ribbon surfaces arising naturally from ribbon presentations. In Subsection~\ref{subsec:Nielsen} we will use this analysis to give examples of ribbon 2--knots that admit non-isotopic minimal bridge trisections.

One of the simplest classes of knotted surfaces is that of {\em ribbon surfaces}, which bound embedded handlebodies in $B^5$ with only index 0 and 1 critical points with respect to the radial height function. Equivalently, an oriented surface in $S^4$ is ribbon if it bounds a ribbon-immersed handlebody in $S^4$. Ribbon surfaces can also be described by ribbon presentations. 

\begin{definition}
Let $L=L_1\cup\cdots\cup L_n$ be an unlink of oriented 2-spheres in $S^4$. For some $m\ge n-1$, let $H=\{h_1,\ldots, h_m\}$ be disjoint embeddings of 3-dimensional 1-handles $I\times D^2$ in $S^4$ such that for each~$i$,
\begin{itemize}
\item Each $h_i(I\times D^2)$ meets $L$ exactly in its attaching region $h_i(\partial I\times D^2)$, and is not tangent to $L$ near this attaching region.
\item $(L\setminus H) \bigcup_{i=1}^m h_i(I\times \partial D^2)$ is a connected, oriented surface $\sS$ (of genus $m-n+1$).
\end{itemize}
The data $(L,H)$ is a {\emph{ribbon presentation for $\sS$.}}

\end{definition}

\begin{figure}
  \centering
\includegraphics[width=80mm]{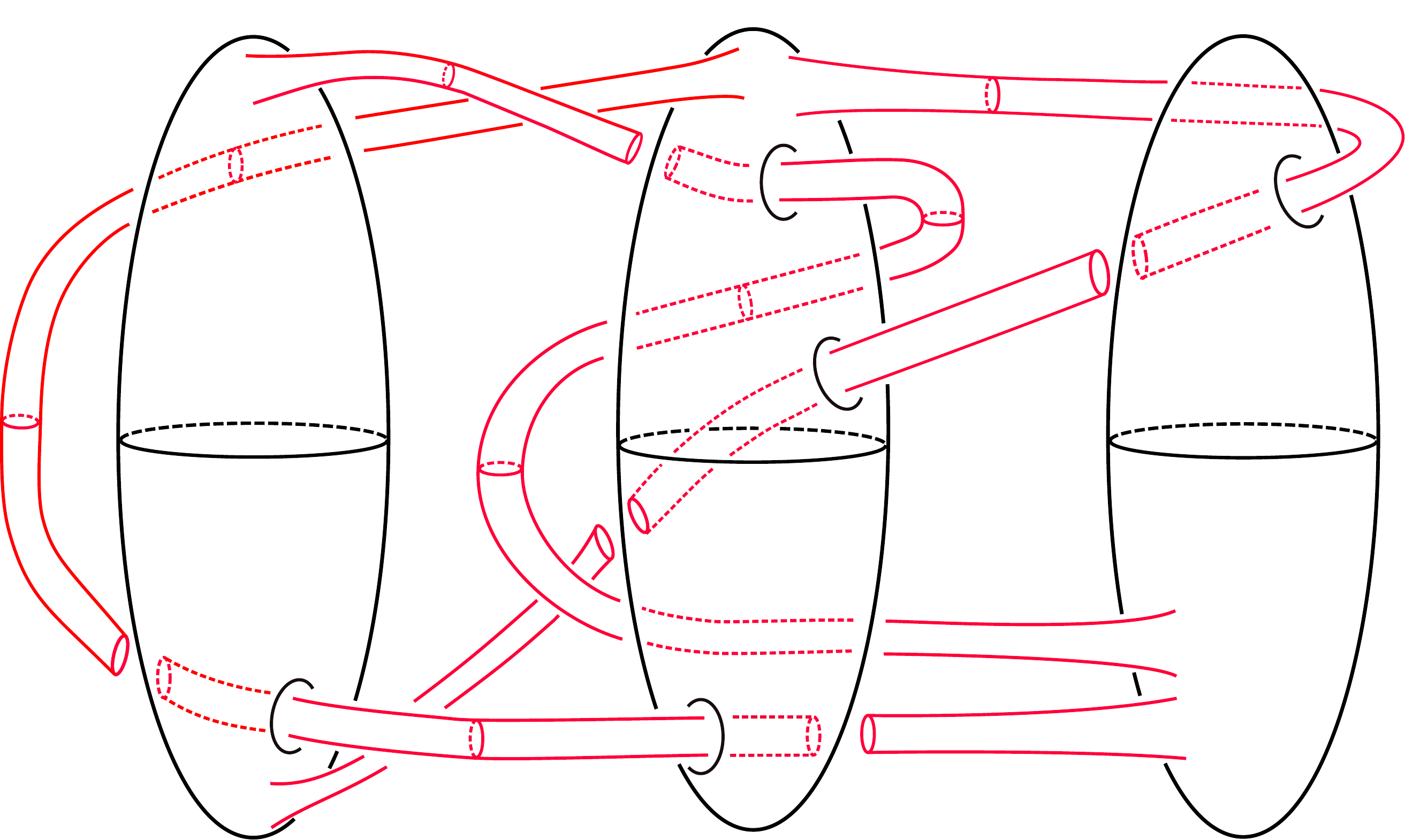}
  \caption{A broken surface diagram indicating a ribbon presentation for a knotted torus.}
  \label{fig:ribbon_pres}
\end{figure}

In short, a ribbon presentation is a description of a surface obtained by fusing an oriented unlink together along oriented tubes. A ribbon presentation has an especially nice broken surface diagram, where the only intersections are double circles between tubes and spheres (See Figure \ref{fig:ribbon_pres}). 

%If a tube crosses over or under a sphere $L_i$ twice in a row, we can isotope the surface to eliminate these crossings. We will assume that all ribbon presentations have been isotoped to be of this form. 

The {\em tube map} encodes a broken surface diagram of a ribbon surface with a virtual graph. Yajima defined the tube map as a diagrammatic operation from classical knots (resp. arcs) to ribbon tori (resp. spheres) \cite{yajima}. Satoh extended the tube map to include virtual crossings, and proved that it is surjective onto ribbon spheres and tori \cite{satoh}. Finally, Kauffman and Martins defined the notion of a virtual graph, allowing for higher genus surfaces \cite{kauffman}. 

In Figure \ref{fig:ribbon_tri-plane}, we illustrate in the first two frames the procedure for obtaining a banded unlink diagram of $\tube(G)$ from $G$. When two edges in $G$ have a virtual crossing, the apparent ``crossing" of the tubed surface may be chosen arbitrarily (the two choices yield isotopic surfaces in $S^4$). The orientations of the overstrands of $G$ determine the crossings of the banded unlink diagram near any classical crossing of $G$ (cf.\ Figure \ref{fig:ribbon_pres}).

\begin{figure}
  \centering
  \labellist
   \pinlabel {$G$} at 35 103
   \pinlabel {$\tube(G)$} at 123 103
   \pinlabel {$\tube(G)$} at 211 88
  \pinlabel {$\mathcal{T}_1$} at 52 -10
  \pinlabel {$\mathcal{T}_2$} at 130 -10
  \pinlabel {$\mathcal{T}_3$} at 203 -10
  \endlabellist
 \includegraphics{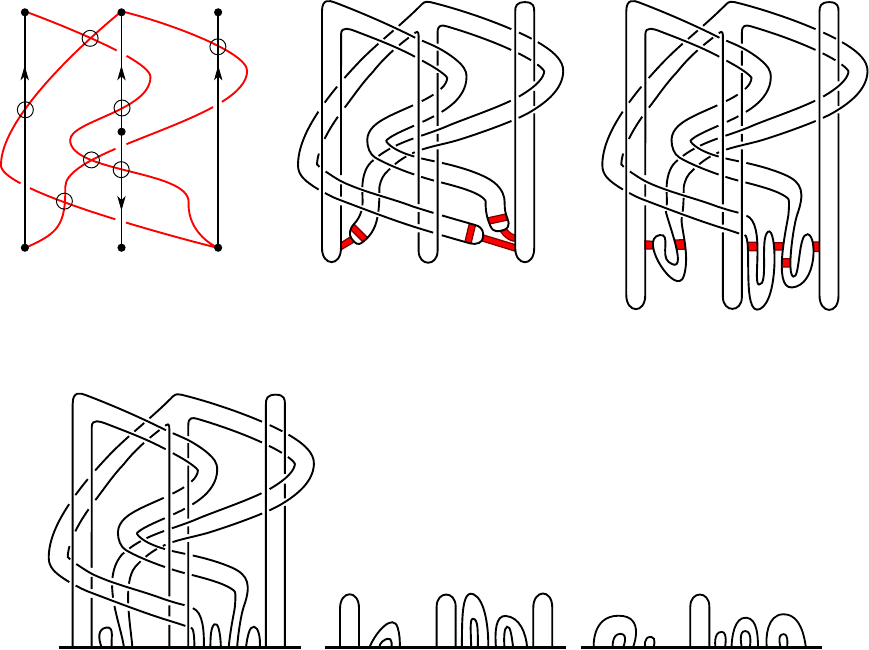}
 \vspace{.15in}
  \caption{Top left: a virtual graph $G$ in 3-bridge position corresponding to the ribbon presentation in Figure \ref{fig:ribbon_pres}. Top middle: A banded unlink diagram for $\tube(G)$. Top right: We perturb the banded unlink diagram to be in 9-bridge position. Bottom: The resulting $(9;3)$--tri-plane diagram of $\tube(G)$.}
  \label{fig:ribbon_tri-plane}
\end{figure}

Via the tube map, a virtual graph can be thought of as a shorthand for a ribbon presentation, where overstrands become spheres in the ribbon presentation and understrands joining them become tubes. A virtual graph diagram is in $n$-\emph{bridge position} if, considered as an immersed graph in $\R^2$, the height function on the graph is Morse, and has $n$ minima and $n$ maxima. Now we show how a virtual graph in bridge position gives rise to a bridge trisection whose parameters are determined by the bridge index and Euler characteristic of the graph.

\begin{proposition}\label{prop:ribbonprestograph}
Let $(L,H)$ be a ribbon presentation with $n$ spheres and $m$ tubes for a surface $\sS$ of genus $g=m-n+1$. Then there is a virtual graph $G$ such that: 
\begin{enumerate}
\item $Tube(G)=\sS$
\item $G$ has Euler characteristic $\chi(G)=1-g=n-m$ 
\item $G$ can be put into $n$-bridge position.
\end{enumerate}
\end{proposition}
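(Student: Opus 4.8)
The plan is to build the virtual graph $G$ directly from the combinatorial data of the ribbon presentation $(L,H)$ and then verify the three listed properties in turn. First I would set up the correspondence between the spheres $L_1,\ldots,L_n$ of the oriented unlink and vertices of $G$, and between the tubes $h_1,\ldots,h_m$ and edges of $G$: each tube $h_j$ connects (possibly the same) two spheres, so declare its core arc to be an edge joining the corresponding vertices. When the band/tube $h_j$ passes over or under one of the spheres in the broken surface diagram (or when two tube cores cross each other), we record this as a classical or virtual crossing of the edge with the corresponding vertex-region or with another edge; the sign/over--under data of $G$ is dictated by the orientations of the spheres exactly as in Figures~\ref{fig:ribbon_pres} and~\ref{fig:ribbon_tri-plane}. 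The content of property~(1), $\tube(G)=\sS$, is then essentially the statement that Yajima--Satoh--Kauffman--Martins tube map recovers the ribbon surface from this graph; I would phrase this as: applying $\tube$ to $G$ reconstructs the banded unlink diagram (spheres from the thickened vertices, tubes from the thickened edges), and surgering yields $\sS$ back, so one gets a diffeomorphism of pairs $(S^4,\tube(G))\cong(S^4,\sS)$.

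Property~(2) is a bookkeeping computation. Since $G$ is a graph with $n$ vertices and $m$ edges, $\chi(G)=n-m$; and by hypothesis $\sS$ has genus $g=m-n+1$, so $\chi(G)=n-m=1-g$, which is exactly what is asserted. I would just remark that this matches $\chi(\sS)=2-2g$ under the tube map's effect on Euler characteristic (tubing a genus-$g$ handlebody spine), so there is nothing to check beyond the arithmetic. This step is routine.

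Property~(3) — putting $G$ into $n$-bridge position — is where the real work lies, and I expect it to be the main obstacle. The idea is to isotope the immersed graph $G$ in $\R^2$ (allowing virtual crossings to be created or resolved freely, since they don't affect $\tube(G)$) so that the height function is Morse with exactly $n$ maxima and $n$ minima. The natural approach: each of the $n$ spheres contributes one vertex; arrange the $n$ vertices at distinct heights and, near each vertex, create one local maximum and one local minimum of the graph (a vertex of valence $\geq 1$ can always be drawn with a single max and single min of the height function on a small neighborhood, absorbing the incident edges into monotone strands leaving the vertex). Then route every edge monotonically between its endpoints, pushing all crossings (classical and virtual) into the monotone portions; this costs no new extrema. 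One must check that this can be done globally and consistently — i.e., that the edges can be made simultaneously monotone without forcing extra critical points — which is the crux. I would argue this by ordering the vertices by height and doing the isotopy one vertex at a time, or by citing the analogous statement for knots/tangles in bridge position and adapting it to graphs; the key observation is that the number of minima (equivalently maxima) in a Morse graph can always be reduced to the number of vertices, since any minimum not at a vertex can be cancelled against an adjacent maximum along an edge by an isotopy of $\R^2$. After this normalization, $G$ sits in $n$-bridge position, and combined with Proposition~\ref{prop:ribbonprestograph}'s output this feeds into the construction of the $(9;3)$-type tri-plane diagrams described in Figure~\ref{fig:ribbon_tri-plane}.
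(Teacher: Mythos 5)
Your overall strategy matches the paper's (encode the ribbon presentation combinatorially, invoke the tube map for property (1), count for property (2), then deal with bridge position), but your execution diverges in a way that creates a real gap exactly at the step you flag as the crux. You model each sphere $L_i$ as a single \emph{vertex} of $G$ and each tube as an edge; the paper instead represents each sphere by a monotone \emph{vertical edge} with two valence-one-or-more endpoints (so $G$ has $2n$ vertices and $n+m$ edges, still giving $\chi(G)=n-m$), with the tube edges running monotonically upward from a bottom vertex of one vertical edge to a top vertex of another, crossing \emph{under} the vertical edges once for each double-point circle pair of the ribbon presentation's broken surface diagram, and with virtual crossings inserted whenever two edges must pass without interacting. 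This is not a cosmetic difference: in the paper's convention the spheres of the ribbon presentation are the tubes of the \emph{overstrand edges} of $G$ (capped off at their free ends), so your vertex-for-sphere model does not obviously feed into the tube map of \cite{satoh,kauffman} as used here, nor into the band-surgery picture of Proposition~\ref{prop:ribbontrisection} and Figure~\ref{fig:ribbon_tri-plane}, where the $n$ vertical edges become the $n$-bridge unlink and the $2m$ band feet sit at the tube-edge crossings.

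The more serious problem is property (3). In your model you want $n$ minima and $n$ maxima from $n$ vertices, and you propose to get ``one local maximum and one local minimum near each vertex''; but a single vertex is one critical point of the height function, not two, so the missing extrema must live in the interiors of edges, and your cancellation heuristic (``any minimum not at a vertex can be cancelled against an adjacent maximum along an edge'') then has to terminate at \emph{exactly} $n$ of each, which you neither state precisely nor prove --- and as stated (``the number of minima can always be reduced to the number of vertices'') it is not a correct general fact about Morse functions on graphs. The paper avoids this entirely: $G$ is \emph{constructed} in $n$-bridge position, with the $n$ bottom vertices of the vertical edges as the minima, the $n$ top vertices as the maxima, and every edge monotone, so no isotopy or cancellation argument is needed. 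I would rewrite your proof to build $G$ this way from the outset; that simultaneously fixes the tube-map convention, the Euler characteristic bookkeeping, and the bridge-position claim.
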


\begin{proof}
There is an obvious broken surface diagram of $\sS$ which `comes from' the ribbon presentation, i.e.\ the unlink $L$ is projected into $\R^3$ so that it is embedded and so that the components of $L_i$ bound disjoint 3-balls in $\R^3$. The projections of the 3-dimensional 1-handles $h_i$ are embedded in $\R^3\setminus L$, and only intersect the 2-spheres in the attaching region $h_i(\partial I\times D^2)$ and a finite number of disks $h_i(\{t\}\times D^2)$. The boundaries of these disks are double point circles, and they are the only self-intersections of the projection of $\sS$. As mentioned above, we can arrange that a tube never crosses the same sphere $L_i$ over or under twice in a row. 
As we traverse the $I$ direction of a tube, it goes through double circle crossings $c_{11},c_{12};c_{21},c_{22};\dots;c_{k1},c_{k2}$, where $c_{i1},c_{i2}$ are crossings with the same component $L_j$, and have opposite over/under information. See Figure \ref{fig:ribbon_pres}.

Now, we construct the graph $G$ in $n$-bridge position:
first, we draw $n$ vertical edges in $\R^2$ for the $n$ components of $L$, with vertices at heights 0 and 1. Let $L_i$ and $L_j$ be the components of $L$ that the first tube $h_1(I\times\partial D^2)$ attaches to. We draw an edge of $G$ joining the bottom endpoint of $L_i$ to the top endpoint of $L_j$, traversing monotonically upwards. For each pair of crossings $c_{i1},c_{i2}$ of a tube with a sphere $L_j$, the edge corresponding to the tube crosses under the vertical edge representing $L_j$. We remember the sign of the crossing by a local orientation of the overstrand: the conormal (in $\R^2$) to the overstrand points to the `under' double circle crossing, as in Figure \ref{fig:ribbon_tri-plane}. We continue in this way, adding an edge for each tube in $H$. When an edge needs to get to the other side of another edge without crossing, a virtual crossing is used. The graph $G$ produced has $2n$ vertices and $n+m$ edges, so its Euler characteristic is $n-m$. The tube of this graph is the same broken surface diagram we began with, so $Tube(G)=\sS$. By construction, $G$ is in $n$-bridge position.
\end{proof}

\begin{proposition}\label{prop:ribbontrisection}
Suppose $\sS$ is a ribbon surface admitting a ribbon presentation $(L,H)$ consisting of $n$ spheres and $m$ tubes. Then $\sS$ admits an $(n+2m;n)$-bridge trisection.
\end{proposition}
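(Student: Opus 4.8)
The plan is to build the desired bridge trisection directly from the virtual graph $G$ produced by Proposition~\ref{prop:ribbonprestograph}. That proposition gives a virtual graph $G$ with $\tube(G)=\sS$, Euler characteristic $n-m$, and a presentation of $G$ in $n$-bridge position. So the first step is to invoke Proposition~\ref{prop:ribbonprestograph} to reduce the problem to producing a bridge trisection of $\tube(G)$ from an $n$-bridge virtual graph $G$ with $n+m$ edges.

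Next I would run the construction illustrated in Figure~\ref{fig:ribbon_tri-plane}: from $G$ in $n$-bridge position, form the banded unlink diagram $(L,B)$ of $\tube(G)$, where $L$ is an $n$-component unlink (the tube of the $n$ vertical edges, closed off) and $B$ is a collection of bands, one for each tube-edge of $G$, so $m$ bands total. The key quantitative step is to count the bridge number of this banded unlink after a standard perturbation that pulls the bands tight so they sit in a neighborhood of the bridge sphere. Each of the $n$ minima/maxima pairs of $G$ contributes to the unlink $L$ in the obvious way, giving roughly $n$ strands from $L$; each of the $m$ bands, once perturbed to be ``short,'' contributes $2$ additional bridge points (this is exactly the elementary-perturbation count used in Remark~\ref{rem:rollspun} and in \cite{MeiZup_bridge1}), for a total of $n+2m$. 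Then I would appeal to the algorithm of Remark~\ref{rem:rollspun} (from \cite{MeiZup_bridge1}): a banded unlink diagram in bridge position with respect to a splitting sphere $F$, with $b$ bridge points and bands on one side, yields a tri-plane diagram $(\D_1,\D_2,\D_3)$ where $\D_1,\D_2$ are diagrams of the tangles cut off by $F$ before surgery and $\D_3$ the tangle after surgery; here $b=n+2m$. Finally, the component count $\bold c = (c_1,c_2,c_3)$: since $L$ and $L_B$ are both $n$-component unlinks and the ``before/before'' pairing $\D_1\cup\overline\D_2$ closes up the unchanged unlink $L$, we get $c_1 = n$, and likewise one checks $c_2 = c_3 = n$ (the surgered unlink $L_B$ also has $n$ components because $\sS$ is connected with $n$ spheres and $m=n-1+g$ bands, $g$ of which are ``genus'' bands that don't reduce components). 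So the outcome is an $(n+2m;n)$-bridge trisection, as claimed.

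The main obstacle I anticipate is the careful bookkeeping of the bridge number: one must verify that after perturbing the banded unlink diagram into bridge position with the bands near $F$, the total number of bridge points is exactly $n+2m$ and not merely bounded by it — i.e., that the construction in Figure~\ref{fig:ribbon_tri-plane} is efficient. This requires being precise about how the $n$ vertical edges contribute exactly $n$ maxima (so $2n$ bridge points would be the naive count for $L$ alone, but the standard unlink on $n$ vertical strands in $1$-bridge-per-component position actually contributes... this is where one must be careful) — really the right statement is that $L$ in bridge position coming from $G$'s vertical edges is in $n$-bridge position and each band perturbation adds exactly two bridge points, $2m$ in all. I would handle this by directly exhibiting the perturbed diagram, as in frames 2 and 3 of Figure~\ref{fig:ribbon_tri-plane}, rather than by an abstract argument, and by citing the elementary perturbation move of \cite{MeiZup_bridge1} for the ``$+2$ per band'' count. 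The identification of the three tangles and the verification that each pairing $\D_i\cup\overline\D_{i+1}$ is an $n$-component unlink diagram then follows formally from the banded-unlink-to-tri-plane algorithm of \cite{MeiZup_bridge1}.
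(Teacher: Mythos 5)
Your overall strategy is exactly the paper's: invoke Proposition~\ref{prop:ribbonprestograph} to get the virtual graph $G$ in $n$-bridge position, pass to a banded unlink diagram as in Figure~\ref{fig:ribbon_tri-plane}, perturb near the bands, and read off the parameters. However, there is a concrete error in the key count. The banded unlink diagram for $\tube(G)$ has $2m$ bands, not $m$: each tube-edge of $G$ contributes a \emph{pair} of parallel bands (the two sides of the tube in cross-section), and this is forced by Euler characteristic, since $\chi(\sS)=|L|+|L_B|-|B|=2n-|B|$ while $\chi(\sS)=2-2(m-n+1)=2n-2m$, so $|B|=2m$. Your compensating claim that each band contributes ``$2$ additional bridge points'' so as to still reach $n+2m$ does not match the elementary perturbation move you cite: one perturbation per band raises the bridge index by exactly $1$ (equivalently, adds $2$ bridge points), so $m$ bands would give only an $(n+m)$-bridge position. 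The correct bookkeeping is $2m$ bands, one perturbation each, hence bridge index $n+2m$.

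The second gap is the component count $c_2$. Your argument identifies $c_1=n$ and $c_3=n$ from the unlinks $L$ and $L_B$, but the remaining pairing $\D_i\cup\overline\D_{i+1}$ corresponding to neither $L$ nor $L_B$ is only asserted to have $n$ components (``likewise one checks''), with a justification that is garbled by the wrong band count. The paper closes this by the identity $\chi(\sS)=c_1+c_2+c_3-b$ from Lemma~3.2 of~\cite{MeiZup_bridge1}, which gives $c_2=2(n-m)+(n+2m)-n-n=n$ with no further diagram analysis; alternatively one can observe directly that this middle pairing is a crossingless $n$-component unlink diagram in the construction of Figure~\ref{fig:ribbon_tri-plane}. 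With the band count corrected and $c_2$ handled by one of these arguments, your proof goes through.
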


\begin{proof}

Given $(L,H)$, first construct a virtual graph $G$ in $n$-bridge position as in Proposition \ref{prop:ribbonprestograph}. In Figure \ref{fig:ribbon_tri-plane}, we illustrate how to obtain a bridge trisection of $\sS$ from $G$. We first obtain a banded unlink diagram of $\sS$ in which the unlink is in $n$-bridge position and there are $2m$ bands so that surgering the unlink along the bands yields an $n$-component unlink. We perturb once near each band to obtain a banded unlink diagram in $(n+2m)$-bridge position. We thus obtain a $(b;(c_1,c_2,c_3))$-bridge trisection of $\sS$ with
\begin{align*}
b&=n+2m,\\
c_1&=n\text{\quad (the number of unlink components)}\\c_3&=n\text{\quad (the number of unlink components after band surgery)},\\
c_2&=\chi(\sS)+b-c_1-c_3\\
&=2(n-m)+(n+2m)-n-n\\
&=n.
\end{align*}

That is, we obtain an $(n+2m;n)$-bridge trisection of $\sS$, by~\cite[Lemma~3.2]{MeiZup_bridge1}.
\end{proof}

In Subsection~\ref{subsec:Nielsen}, we will show that by using the construction of Proposition \ref{prop:ribbontrisection} on distinct ribbon presentations of the same 2-knot, one can obtain distinct bridge trisections of the same surface, both with minimal parameters.

\begin{definition}
A {\it ribbon bridge trisection} is any bridge trisection obtained from the construction of Proposition \ref{prop:ribbontrisection}.
\end{definition}

Recall that $t(\TT)$ denotes the triple point number of the trisection $\TT$ (cf. Subsection~\ref{subsec:triple_point}).

\begin{proposition}
If $\TT$ is a ribbon bridge trisection, then $t(\TT)=0$.
\end{proposition}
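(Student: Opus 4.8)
The plan is to trace through the construction in Proposition \ref{prop:ribbontrisection} and identify an explicit uncrossing sequence for each $\widehat{\DD}_i$ that uses no RIII moves. Recall that $t(\TT) = \min t(\DD) = \min\sum_i t(\widehat{\DD}_i)$, and that each $t(\widehat{\DD}_i)$ is a minimum over uncrossing sequences of the number of RIII moves used. Since $t(\TT)$ is visibly nonnegative, it suffices to exhibit one ribbon tri-plane diagram $\DD$ for $\TT$ with $t(\widehat{\DD}_i) = 0$ for all $i$, i.e.\ such that each unlink diagram $\widehat{\DD}_i = \DD_i \cup \overline{\DD}_{i+1}$ can be reduced to a crossingless diagram using only RI and RII moves.

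First I would recall precisely where the crossings in the ribbon tri-plane diagram come from. In the banded unlink diagram of $\sS$ produced from the virtual graph $G$, the crossings arise exactly from the places where an edge of $G$ (a tube) passes under a vertical edge (a sphere), in pairs $c_{i1}, c_{i2}$ with opposite over/under information along the tube, together with the virtual crossings (which are resolved arbitrarily but harmlessly). Crucially, in the banded unlink picture the underlying unlink $L$ (resp.\ $L_B$) is literally an unlink of round circles, with each ``tube strand'' running from one sphere-circle to another; the crossings are all of the form where one of these strands dives under a sphere-circle and comes back out. In the associated tangle diagrams $\DD_1, \DD_2, \DD_3$ (built from $(\overline{B_1}, \overline{B_1 \cap L})$, $(B_2, B_2\cap L)$, $(B_2, B_2 \cap L_B)$ after the perturbations near the bands), each pairing $\widehat{\DD}_i$ is a diagram of one of these unlinks, and the key point is that it is a diagram in which every component bounds an obvious embedded disk in the plane whose interior meets the diagram only in the ``pass-under'' crossings.

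The main step is then to argue that such a diagram can be uncrossed using only RI and RII moves. I would do this by an innermost-disk / induction argument: take a component that is topologically a round circle with a strand of another component passing underneath it; an RII move (pulling the under-strand out from under the circle) removes both crossings of such a pair simultaneously, or, when a strand passes under and the two crossings are adjacent along it, an RII move removes them. Iterating, one strips the under-passing strands off one at a time; because the starting configuration came from tubes attached to a genuine unlink in $\R^3$ (projected to be embedded with components bounding disjoint balls), there is no obstruction to always finding an outermost such pass-under to remove, and the process only uses RII (and possibly RI for loops created by the band perturbations, which again contribute no triple points). Hence each $\widehat{\DD}_i$ has an uncrossing sequence with zero RIII moves, so $t(\widehat{\DD}_i) = 0$, and therefore $t(\DD) = 0$ and $t(\TT) = 0$.

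The hard part will be making the ``innermost pass-under'' induction fully rigorous: one must be careful that removing one RII pair does not force RIII moves elsewhere, and that the virtual-crossing resolutions and the per-band perturbations do not secretly introduce an essential triple point. I expect this to be handled by appealing to the explicit local models in Figure \ref{fig:ribbon_tri-plane} and Figure \ref{fig:ribbon_pres} — near each tube–sphere interaction the diagram looks locally like a strand passing under an arc, which is removable by a single RII move once it is outermost — together with the observation (from Remark \ref{rem:rollspun} and the construction) that before the band perturbations the unlink components are unknotted and unlinked in the strongest diagrammatic sense. An alternative, and perhaps cleaner, route is to invoke Remark \ref{rem:sign}: the broken surface diagram that the ribbon bridge trisection construction naturally produces has only double-point circles (it is a ribbon broken surface diagram, by design), hence no triple points; combined with $t(\TT) \geq t(\Ss) \geq 0$ this still only gives $t(\Ss) = 0$, so one genuinely does need the diagrammatic uncrossing argument to pin down $t(\TT) = 0$ rather than just $t(\Ss) = 0$ — this is exactly the subtlety flagged just before the statement, and it is where I would focus the write-up.
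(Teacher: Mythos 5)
Your proposal is correct and takes essentially the same approach as the paper: the paper's proof simply observes that for the ribbon tri-plane diagram produced by Proposition \ref{prop:ribbontrisection}, each pairing $\DD_i\cup\overline{\DD}_{i+1}$ is either crossingless or reducible to a crossingless diagram using only RII moves (retracting the descending tube "fingers" that pass under the sphere strands), whence $t(\DD)=0$ and $t(\TT)=0$. Your more detailed justification of the RII-only uncrossing, and your correct observation that the broken-surface route only yields $t(\Ss)=0$ rather than $t(\TT)=0$, fill in exactly the details the paper leaves implicit.
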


\begin{proof}
Let $(\DD_1,\DD_2,\DD_3)$ be a ribbon bridge trisection diagram as obtained in Proposition \ref{prop:ribbontrisection}. Each unlink diagram $\DD_i\cup\overline{\DD_{i+1}}$ is either crossingless or can be made crossingless via only RII moves. Thus, $t(\TT)=0$.
\end{proof}

%%%%%%%%%%%%%%%%%%%%%%%%%%%%%%%%%%%%%%%%%%%%%%%%%%%%%%%%%%%%%%%%%%%
%%%%%%%%%%%%%%%%%%%%%%%%%%%%%%%%%%%%%%%%%%%%%%%%%%%%%%%%%%%%%%%%%%%
\section{The fundamental group, the peripheral subgroup, and quandle colorings}\label{sec:pi1}
%%%%%%%%%%%%%%%%%%%%%%%%%%%%%%%%%%%%%%%%%%%%%%%%%%%%%%%%%%%%%%%%%%%
%%%%%%%%%%%%%%%%%%%%%%%%%%%%%%%%%%%%%%%%%%%%%%%%%%%%%%%%%%%%%%%%%%%

In this section we describe a number of ways to calculate a presentation of the fundamental group of the exterior of a surface-knot from a tri-plane diagram for the surface.  We also discuss diagrammatic approaches to Fox colorings and, more generally, quandle colorings of surface-knots, and describe a way to present the peripheral subgroup of a surface-knot.  Our approaches give rise to some interesting group-theoretic questions about tri-plane diagrams.

\subsection{The fundamental group}
Applying van-Kampen's theorem to the exterior of the bridge trisection yields the following cube of pushouts. Let $\bold p$ denote the set of $2b$ intersections of $\Sigma$ with $\sS$. The three presentation types of Theorem \ref{thm:pres} correspond to choosing a group $G$ from the first, second or third column of this cube to express $\pi_1(S^4\setminus \sS)$ as a quotient of $G$.

\begin{center}
\begin{tikzcd}[row sep=scriptsize, column sep=scriptsize]
    & \pi_1(H_1\setminus \T_1) \arrow[r] \arrow[dr] &  \pi_1(X_1\setminus \mathcal{D}_1) \arrow[dr]  &\\
    \pi_1(\Sigma\setminus  \bold p) \arrow[r]\arrow[dr]\arrow[ur]  &   \pi_1(H_2\setminus \T_2) \arrow[ur, crossing over] \arrow[dr]   &   \pi_1(X_3\setminus \mathcal{D}_3) \arrow[r]  &   \pi_1(S^4\setminus \sS)\\
    &   \pi_1(H_3\setminus \T_3) \arrow[r] \arrow[ur] &   \pi_1(X_2\setminus \mathcal{D}_2) \arrow[ur] \arrow[ul, leftarrow, crossing over] &
\end{tikzcd}
\end{center}

\begin{theorem}\label{thm:pres}
\label{thm:grp_pres}
	Let $\DD$ be a $(b;\bold c)$--tri-plane diagram for a surface knot $\Ss\subset S^4$.  Then $\pi_1(S^4\setminus\nu(\Ss))$ admits a presentation of each of the following types:
	\begin{enumerate}
		\item\label{firstpres} $2b$ meridional generators and $3b$ Wirtinger relations,
		\item $b$ meridional generators and $2b$ Wirtinger relations, or
		\item\label{thirdpres} $c_i$ meridional generators and $b$ Wirtinger relations (for any $i\in\Z_3$).
	\end{enumerate}
Moreover, these presentations can be obtained explicitly from $\DD$.
\end{theorem}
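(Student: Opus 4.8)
The plan is to analyze the van-Kampen cube displayed just before the theorem, computing each vertex group and each edge map explicitly in terms of the tri-plane diagram $\DD$, and then reading off the three presentation types as quotients of groups appearing in the first, second, and third columns respectively.

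First I would set up the building blocks. Since $\T_i = H_i \cap \Ss$ is a trivial (boundary-parallel) $b$-strand tangle in the $3$-ball $H_i$, the exterior $H_i \setminus \nu(\T_i)$ is a handlebody, and $\pi_1(H_i\setminus\nu(\T_i))$ is free on $b$ meridional generators — one meridian per strand of $\T_i$, read off from the tangle diagram $\DD_i$. Similarly $\pi_1(\Sigma\setminus\bold p)$ is free on $2b-1$ meridians of the bridge points (or, more conveniently for bookkeeping, we work with the surjection from the free group on all $2b$ meridians). The key local computation is the edge map $\pi_1(\Sigma\setminus\bold p)\to\pi_1(H_i\setminus\nu(\T_i))$: each of the $b$ strands of $\T_i$ connects two bridge points, so the two corresponding meridians in $\Sigma\setminus\bold p$ are identified in $H_i\setminus\nu(\T_i)$ after conjugation dictated by the over/under crossings of $\DD_i$ — exactly the Wirtinger relations of the tangle diagram $\DD_i$. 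Finally, since $\D_i$ is a collection of $c_i$ boundary-parallel disks in $X_i$, the exterior $X_i\setminus\nu(\D_i)$ is a $\natural^{c_i}(S^1\times B^3)$, so $\pi_1(X_i\setminus\nu(\D_i))$ is free on $c_i$ meridional generators, and the edge map from $H_i\setminus\nu(\T_i)$ (or $H_{i-1}\setminus\nu(\T_{i-1})$) into it kills the $b$ Wirtinger relations coming from the unlink diagram $\DD_i\cup\overline\DD_{i+1}$ — the $c_i$ components of that unlink correspond to the $c_i$ surviving meridional generators.

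With these identifications in hand, I would then traverse the cube three ways. Taking the pushout along the route through $\Sigma\setminus\bold p$ and the three $H_i\setminus\nu(\T_i)$: van-Kampen gives $\pi_1(S^4\setminus\nu(\Ss))$ as the free group on $2b$ bridge-point meridians modulo the union of Wirtinger relations from $\DD_1$, $\DD_2$, $\DD_3$ — that is $2b$ generators and $3b$ relations, establishing (\ref{firstpres}). For (2), I would first collapse $\pi_1(\Sigma\setminus\bold p)\to\pi_1(H_1\setminus\nu(\T_1))$, which is a surjection with kernel normally generated by the $b$ Wirtinger relations of $\DD_1$; using $\pi_1(H_1\setminus\nu(\T_1))$ (free on $b$ meridians) as the starting group and pushing out with the remaining two $H$-vertices contributes only $\DD_2$ and $\DD_3$'s relations, giving $b$ generators and $2b$ relations. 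For (\ref{thirdpres}), I would instead start from $\pi_1(X_i\setminus\nu(\D_i))$ — free on $c_i$ meridians — and observe that building up the rest of $S^4\setminus\nu(\Ss)$ from this corner of the cube adds the Wirtinger relations of the single diagram needed to close up, namely the $b$ relations of (say) $\DD_{i+1}$ read relative to the $c_i$ generators; this yields $c_i$ generators and $b$ relations. In each case I would note that the relations are genuinely Wirtinger-type — conjugates of one generator equal to another — because that is the only kind of relation a tangle or unlink diagram produces, and that every step is algorithmic from $\DD$, giving the ``moreover'' clause.

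The main obstacle I anticipate is not the topology of the building blocks (those are standard: trivial tangle exteriors and boundary-parallel disk-system exteriors are handlebodies and $S^1\times B^3$-connect-sums), but rather carefully tracking basepoints and the conjugating words in the edge maps so that the three pushout computations are consistent — in particular verifying that the Wirtinger relations attributed to $\DD_i$ when approached from the $\Sigma$ side agree with those attributed to $\DD_i\cup\overline\DD_{i+1}$ when approached from the $X_i$ side, up to Tietze moves, and that collapsing one edge of the cube first (as in presentations (2) and (3)) genuinely removes the expected number of generators and relations without hidden redundancy. I would handle this by fixing once and for all a path system from a single basepoint on $\Sigma$ to small meridian loops at each bridge point, and expressing every edge homomorphism in those coordinates, so that the three computations become three bookkeeping passes over the same labeled diagram. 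Throughout I would lean on the cube of pushouts already displayed in the excerpt, so the proof is really just ``evaluate the cube'' plus the local Wirtinger analysis of tangle and unlink diagrams.
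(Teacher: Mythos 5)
Your proposal is correct, and it works inside the very framework the paper sets up (the cube of pushouts displayed before the theorem), so the underlying mathematics matches; the differences are in execution. The paper's written proof does not actually evaluate the whole cube: it describes three explicit Wirtinger-labeling algorithms on the tri-plane diagram (label bridge points and percolate up through all three tangles; label the maxima of one tangle and percolate down then up through the other two; label unlink components and percolate through the remaining tangle), and then justifies all three at once by observing that each algorithm computes $\pi_1(X,q)$ for $X=(H_1\cup H_2\cup H_3)\setminus\nu(\T_1\cup\T_2\cup\T_3)$, and that $\pi_1(X,q)\cong\pi_1(S^4\setminus\nu(\Ss))$ because the exterior is built from $X\times I$ by attaching only $4$--dimensional $3$-- and $4$--handles. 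That shortcut lets the paper avoid analyzing the $\pi_1(X_i\setminus\nu(\D_i))$ vertices and their edge maps for presentations (1) and (2), which is where most of your anticipated basepoint/conjugator bookkeeping lives. The other substantive difference concerns presentation (3) and the ``explicitly from $\DD$'' clause: to exhibit $c_i$ meridional generators and express every bridge-point meridian in terms of them, the paper first applies tri-plane moves to make $\DD_i\cup\overline\DD_{i+1}$ crossingless, invoking the uniqueness of bridge splittings of unlinks (Otal, Negami--Okita); in your version the group $\pi_1(X_i\setminus\nu(\D_i))$ is free of rank $c_i$ regardless, but making the presentation explicit requires solving the Wirtinger relations of the possibly crossing-laden unlink diagram to rewrite the bridge-point meridians in the chosen $c_i$ generators --- doable, but less immediate, and worth stating. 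Finally, a small indexing slip: from the corner $X_i\setminus\nu(\D_i)$ the remaining tangle is $\T_{i+2}$, so the $b$ relations come from $\DD_{i+2}$, not $\DD_{i+1}$.
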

\begin{proof}
These presentations can be calculated from a tri-plane diagram by carrying out the following corresponding processes. In all cases, begin by orienting each strand of each tangle. If $\Ss$ is orientable, then it will be possible (but not necessary) to orient the tangles compatibly in the sense that the three arcs adjacent at each bridge point will be all oriented away from or all oriented toward the bridge point (see Lemma 2.1 from~\cite{MTZ_graph}). The basepoint $q$ of all of these presentations lies in the bridge sphere (away from $\Ss$) so that it is \emph{above} the tri-plane onto which $\Ss$ is projected to give $\DD$. To choose curves from the basepoint about a meridian of $\Ss$ depicted in a tangle $\DD_i$ of $\DD$, we choose an arc $\eta$ in $\Ss$ from the basepoint to that meridian whose projection to $\DD_i$ has only over crossings. Note that when $\eta$ is projected to $\DD_{i+1}$ or $\DD_{i-1}$, its projection will also only have over crossings, so this choice may be made consistently.

	\begin{enumerate}
		\item Assign labels $\{x_i\}_{i=1}^{2b}$ to the $2b$ common bridge points of the tangle diagrams $\DD_i$. These labels will represent the meridional generators in our presentation. For each arc adjacent to the bridge point labeled $x_i$, extend the label over the arc as $x_i$ if the arc is oriented away from the bridge point, and extend the label over the arc as $\overline x_i$ if the arc is oriented toward the bridge point.  Now, percolate the labels throughout each tangle diagram by applying the Wirtinger algorithm at each crossing, moving up through the height gradient of each tangle diagram.  The $3b$ relations come from the equalities encountered at the $3b$ arcs containing maximum points of the tangle diagrams.
		\item  Assign labels $\{x_i\}_{i=1}^{b}$ to the $b$ arcs containing maximum points of one of the three tangle diagrams $\DD_i$.  Percolate the labels throughout the tangle diagram by applying the Wirtinger algorithm at each crossing, moving down through the height gradient of the tangle diagram.  After finding labels for the $2b$ bridge points, and equating these with the meridians to the bridge points in the other two tangle diagrams, percolate upwards in these diagrams, eventually obtaining $2b$ relations when these arcs join together at their maxima.
Here, the orientations of the arcs are important: If $w$ and $w'$ are two words labeling two arcs that meet at a bridge point, the resulting relation is $w'=w$ if the orientations of the two arcs agree (are both outward or inward) at the bridge point, and the resulting relation is $w' = \overline w$ if the orientations disagree.
		\item First, apply tri-plane moves to remove the crossings from the tangle diagrams $\DD_i$ and $\DD_{i+1}$ for some fixed $i\in\Z_3$. This is possible because $\widehat\DD_i = \DD_i\cup\overline\DD_{i+1}$ is a diagram for a $c_i$--component unlink, and unlinks admit unique bridge splittings at each level of complexity (i.e., based on the number of bridges of each component)~\cite{NegOki,Otal}. Assign labels $\{x_i\}_{i=1}^{c_i}$ to the $c_i$ components of the unlink diagram $\DD_i$. (Here, it is best to orient the strands of $\widehat\DD_i$ coherently.) This induces labels at the $2b$ common bridge points.  Percolate the labels throughout $\DD_{i+2}$ by applying the Wirtinger algorithm at each crossing, moving up through the height gradient of the tangle diagram.  The $b$ relations come from the equalities encountered at the arcs containing the $b$ maximum points of the tangle diagram.
	\end{enumerate}

	We now describe why the processes given above work to calculate $\pi_1(S^4\setminus\nu(\Ss))$.
	Let $X = (H_1\cup H_2\cup H_3)\setminus\nu(\T_1\cup\T_2\cup\T_3)$.
	Let $q$ be a point in $\Sigma\setminus\nu\T_i$.
	It should be clear the Wirtinger algorithms outlined calculate the group $\pi_1(X,q)$.
	However, we have that $\pi_1(X,q)\cong \pi_1(S^4\setminus\nu(\Ss))$, since $S^4\setminus\nu(\Ss)$ is built from $X\times I$ by attaching only (4--dimensional) 3--handles and 4--handles.
	
\end{proof}

\begin{remarks}
\label{rmk:grp_pres}
	Presentation~(3) is strengthened in Proposition~4.5 of~\cite{MeiZup_bridge1} to a presentation with $c_i$ generators and $b-c_j$ relations, for any distinct $i,j\in\Z_3$. This is optimal from the perspective of group deficiency, and shows that the deficiency of $\pi_1(S^4\setminus\nu(\Ss))$ is at least $c_i+c_j-b$.
\end{remarks}

\subsection{The peripheral subgroup}

Once the Wirtinger algorithm has been completed, it is simple to write down the generators of a peripheral subgroup of $\Ss$ in terms of these Wirtinger generators for $\pi_1(S^4\setminus \Ss)$. The inclusion $\partial \nu \Ss \hookrightarrow S^4\setminus \nu \Ss$ induces a homomorphism $\pi_1(\partial \nu \Ss)\rightarrow \pi_1(S^4\setminus \nu \Ss)$, unique up to a choice of meridian. The image of this homomorphism is the {\emph{peripheral subgroup}} of $\Ss$, whenever $\Ss$ is connected. See \cite{kazama} for some background on the peripheral subgroups of knotted tori. If $\Ss$ has more than one component, we can still consider the image of the induced map from the boundary of a tubular neighborhood of one component of $\Ss$ into the exterior of $\Ss$.

The procedure is as follows, for connected $\Ss$.

\textbf{Step 1}:  Choose a basepoint $y$ for $\pi_1(\Ss)$ to be one of the bridge points, where a tangle arc meets the bridge sphere, call the meridian to this arc  $\mu$. There is an arc $\eta$ from the basepoint $q$ of $\pi_1(S^4\setminus\Ss)$ to $y$ lying on the bridge sphere.

\textbf{Step 2}:  Choose a generating set $\gamma_1,\dots,\gamma_n$ for $\pi_1(\Ss,y)$  so that each $\gamma_i$ is a union of tangle arcs. Write each of the generators as a word in the Wirtinger labels (traverse the curve once, starting at $y$).

\textbf{Step 3:} Push each $\gamma_i$ off $\Ss$ (using the arc $\eta$ from $y$ to $q$), then add a multiple of $\mu$ to arrange for each push-off to be nullhomologous in the complement of $\Ss$.  Push $\eta$ off with the curve, so that the curve is a based loop $\gamma'_i$ in $S^4$.

\begin{lemma}The subgroup $\langle \mu, \gamma'_1, \ldots, \gamma'_n \rangle$ of $\pi_1(S^4\setminus \nu \Ss)$ is the peripheral subgroup of $\Ss$.
\end{lemma}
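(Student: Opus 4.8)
The plan is to reduce the statement to two standard facts and a short bookkeeping check. Recall that the peripheral subgroup is \emph{defined} to be the image of the map $\iota_*\colon \pi_1(\partial\nu\Ss)\to\pi_1(S^4\setminus\nu\Ss)$ induced by the inclusion $\iota\colon\partial\nu\Ss\hookrightarrow S^4\setminus\nu\Ss$, computed with respect to the fixed connecting arc $\eta$ from the basepoint $q$ to the bridge point $y$ (the ``choice of meridian'' ambiguity is pinned down by $\eta$). So it suffices to (i) produce an explicit generating set of $\pi_1(\partial\nu\Ss)$ and (ii) identify the $\iota_*$-images of those generators with $\mu,\gamma'_1,\dots,\gamma'_n$, up to powers of $\mu$.

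First I would record the bundle picture: $\partial\nu\Ss$ is the total space of the normal circle bundle $\pi\colon\partial\nu\Ss\to\Ss$, and $\mu$ is the class of a fiber. The homotopy exact sequence of this fibration,
$$\pi_2(\Ss)\longrightarrow\pi_1(S^1)\longrightarrow\pi_1(\partial\nu\Ss)\longrightarrow\pi_1(\Ss)\longrightarrow 1,$$
shows that $\pi$ induces a surjection $\pi_1(\partial\nu\Ss)\twoheadrightarrow\pi_1(\Ss)$ whose kernel is the cyclic subgroup $\langle\mu\rangle$ generated by the fiber class. (This is uniform: when $\Ss\cong S^2$ one just gets $\pi_1(\partial\nu\Ss)=\langle\mu\rangle$, and neither the normal Euler number nor the possible non-triviality of the bundle when $\Ss$ is non-orientable affects the argument.) Consequently, for any choice of lifts $\tilde\gamma_1,\dots,\tilde\gamma_n\in\pi_1(\partial\nu\Ss)$ of the chosen generators $\gamma_1,\dots,\gamma_n$ of $\pi_1(\Ss,y)$ from Step~2, the set $\{\mu,\tilde\gamma_1,\dots,\tilde\gamma_n\}$ generates $\pi_1(\partial\nu\Ss)$; moreover any two lifts of a fixed $\gamma_i$ differ by left multiplication by an element of $\langle\mu\rangle$.

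Next I would match the loops. Pushing $\gamma_i$ off $\Ss$ along $\eta$, as in Step~3, produces a loop that lies on $\partial\nu\Ss$, is based (via $\eta$) at $q$, and projects under $\pi$ to $\gamma_i$; hence it represents $\iota_*\tilde\gamma_i$ for some lift $\tilde\gamma_i$. The subsequent normalization (adding a multiple of $\mu$ so the push-off is nullhomologous in the complement) only replaces this lift by another one, so it changes $\gamma'_i$ only by a power of $\mu$. Likewise $\mu$ itself, chosen in Step~1 and joined to $q$ by $\eta$, is $\iota_*$ of the fiber class. Therefore
$$\langle \mu,\gamma'_1,\dots,\gamma'_n\rangle \;=\; \iota_*\!\left(\langle\mu,\tilde\gamma_1,\dots,\tilde\gamma_n\rangle\right) \;=\; \iota_*\!\left(\pi_1(\partial\nu\Ss)\right),$$
which is exactly the peripheral subgroup.

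The hard part is really just bookkeeping rather than substance: one must make sure every generator is transported to $q$ along the \emph{same} arc $\eta$, so that one is computing a single image of $\iota_*$ and not a family of conjugates; one should check the degenerate case $\Ss\cong S^2$ and the twisted-bundle case in the exact-sequence step; and one should verify that the recipe ``push off along $\eta$, then correct by a power of $\mu$'' genuinely yields a based loop on $\partial\nu\Ss$ projecting to $\gamma_i$. None of these alters the subgroup generated, precisely because every correction lies in $\langle\mu\rangle$ and $\mu$ is already among the generators.
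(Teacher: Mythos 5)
Your argument is correct and follows the same route as the paper, which simply asserts that after transporting the basepoint along $\eta$ one has $\pi_1(\partial\nu(\Ss),x)=\langle\mu,\gamma'_1,\dots,\gamma'_n\rangle$; you supply the justification for that assertion via the homotopy exact sequence of the circle bundle $\partial\nu\Ss\to\Ss$ and the observation that the framing correction only changes each lift by a power of $\mu$. This is a fleshed-out version of the paper's one-line proof, not a different approach.
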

\begin{proof}
This follows essentially from the definition of peripheral subgroup; note that if $x$ is pushed along $\eta$ to lie in $\partial(\nu(\Ss))$, then $\pi_1(\partial(\nu(\Ss)),x)=\langle \mu, \gamma'_1, \ldots, \gamma'_n \rangle$ .
\end{proof}

Once the generating set is established, one could use Schreier's lemma to get a presentation for the peripheral subgroup.

\begin{example}\label{ex:kinoshita}
In Figure \ref{fig:kinoshita}, we draw a tri-plane diagram of a link $\Ll=P_1\sqcup P_2$ of two unknotted projective planes; here $b=4$ and $c_i=2$ for all $i\in\Z_3$. Taken in isolation, the surfaces $P_1$ and $P_2$ are the unknotted projective planes $P_+$ and $P_-$, respectively. Since the union of the first two tangle diagrams has no crossings, we find a presentation of $\pi_1(S^4\setminus \Ll)$ as in Theorem~\ref{thm:grp_pres}(\ref{thirdpres}). We implicitly add the relations corresponding to the trivial tangles in $\DD_1$ and $\DD_2$ to see that the leftmost meridians in $\DD_3$ correspond to the same generator (up to orientation), as do the rightmost. Then we apply the Wirtinger algorithm to $\DD_3$ to obtain a relation for each of the four maxima. One relation corresponding to each of $P_1$ and $P_2$ is redundant, so we are left with the final presentation \begin{align*}\pi_1(S^4\setminus L)&=\langle a,b\mid\overline{b}aba=\overline{a}bab=1\rangle\\&=\langle a,b\mid a^2\overline{b}^{2}=\overline{a}bab=1\rangle\\&\cong Q_8\hspace{.1in}(a\mapsto i, b\mapsto j)\end{align*} 

We indicate the generator of $\pi_1(P_1)$ in bold/purple in Figure~\ref{fig:kinoshita}. Since the bold strand has a single undercrossing in the diagram, we add a canceling undercrossing to indicate a parallel copy of this curve (taking the basepoint to lie in $\boundary(\nu(P_1))$ that is nullhomologous in $S^4\setminus P_1$). This parallel copy represents $b$ in $\pi_1(S^4\setminus\Ll)$. We conclude that the peripheral subgroup of $P_1$ in $S^4\setminus\nu(L)$ is generated by the meridian $a$ and this parallel curve $b$; hence is isomorphic to $Q_8$. (By symmetry, so is the peripheral subgroup of $P_2$.)

As a consequence, since the peripheral subgroup of each $P_i$ is not $\mathbb{Z}_2$, the link $\Ll$ cannot factor as $P_{\pm}\# \Ll'$ for any link $\Ll'$ of a 2--sphere and an $\mathbb{RP}^2$. This implies that the analog of the {\emph{Kinoshita conjecture}} (that every projective plane in $S^4$ factors as the connected sum of $P_{\pm}$ and a knotted 2--sphere) is false for multiple component links. This example was first noted by Yoshikawa \cite{yoshikawa}.

In Figure~\ref{fig:kinoshita2}, we generalize $\Ll$ to an infinite family $\{\Ll_n=P^n_1\sqcup P^n_2\}_{n>0}$ of 2--component links of projective planes. Repeating the same procedure, we find a presentation
\begin{align*}\pi_1(S^4\setminus \Ll_n)&=\langle a,b\mid \overline{b}(\overline{a}\overline{b})^{n-1}a(ba)^n=\overline{a}(\overline{b}\overline{a})^{n-1}b(ab)^n=1\rangle\\&=\langle a,b\mid (\overline{a}\overline{b})^{n-1}a(ba)^n=a^2b^2=1\rangle\\&=\langle a,c\mid c=a\overline{c}\overline{a},c^n=\overline{a}c^n\overline{a}\rangle \hspace{.1in} (c=ab)\\&=\langle a,c\mid c=a\overline{c}\overline{a}, \overline{a}^2=c^{2n}\rangle\cong Q_{8n},\end{align*}
where $Q_{8n}$ is the generalized quaternion group of order $8n$.

The peripheral subgroup of $P^n_1$ inside $S^4\setminus\nu(\Ll_n)$ is generated by $a$ and
$$b(ab)^{n-1}=a^{-1}c^n,$$
so the peripheral subgroup of $P^n_1$ is generated by $a$ and $c^n$ and hence is isomorphic to $Q_8$ for all $n$. (Similarly, the peripheral subgroup of $P^n_2$ in $\pi_1(S^4\setminus \Ll_n)$ is isomorphic to $Q_8$.)

\begin{figure}{\centering

\labellist
  \pinlabel {\footnotesize $a$} at 10 28
   \pinlabel {\footnotesize $a$} at 46 28
    \pinlabel {\footnotesize $\overline{a}$} at 27 29.5
        \pinlabel {\footnotesize $\overline{a}$} at 63 29.5
          \pinlabel {\footnotesize $b$} at 100 30
   \pinlabel {\footnotesize $b$} at 136 30
    \pinlabel {\footnotesize $\overline{b}$} at 83 31
        \pinlabel {\footnotesize $\overline{b}$} at 119 31
        \pinlabel {\footnotesize $\overline{b}ab=\overline{a}$} at 315 130
         \pinlabel {\footnotesize $\overline{b}\overline{a}bab=\overline{b}$} at 353 152
          \pinlabel {\footnotesize $\overline{a}\overline{b}\overline{a}ba=a$} at 331 17
           \pinlabel {\footnotesize $\overline{a}\overline{b}a=b$} at 360 2
      \pinlabel \rotatebox{70}{$\textcolor{darkpurple}{b}$} at 354 105
\endlabellist
\includegraphics[width=5in]{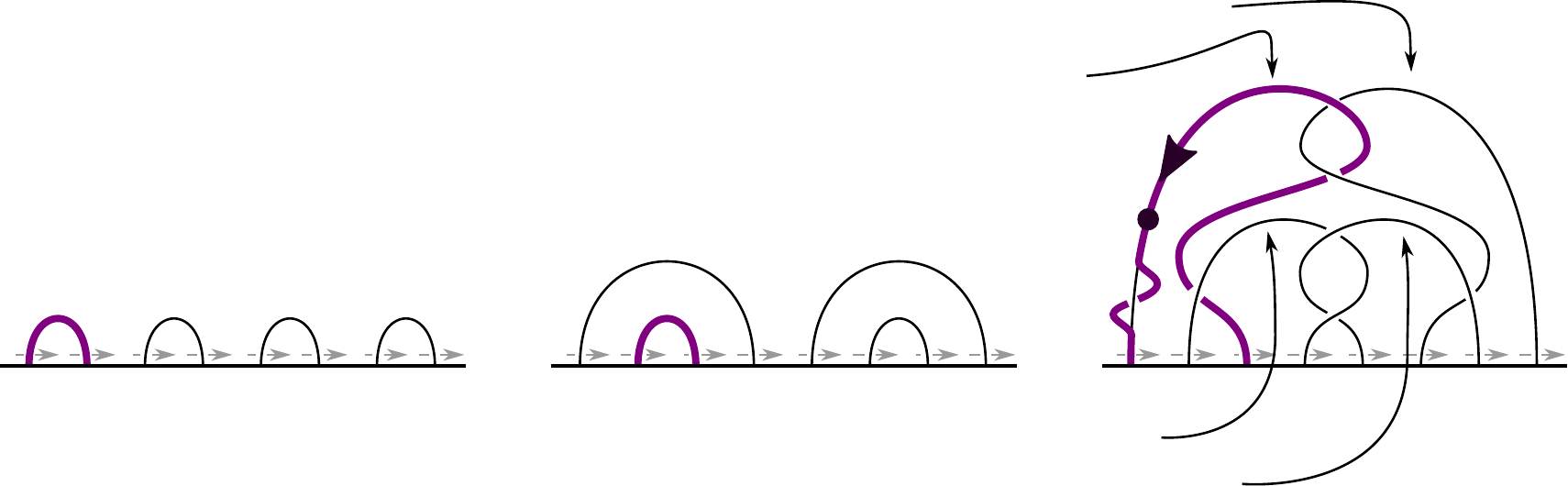}
\caption{A tri-plane diagram of $L=P_1\sqcup P_2$ as in Example \ref{ex:kinoshita}. We illustrate the process of obtaining a presentation of $\pi_1(S^4\setminus L)$. Near each bridge point, we draw arrows indicating an oriented meridian (which are labelled in the leftmost tangle diagram). In purple/bold, we indicate a generator of $\pi_1(P_1)$. Taking the basepoint to lie in $\partial(\nu(P_1))$, we follow the purple curve, passing under strands in the three twice via oriented meridians $a,b$ (in order). We push the curve off $P_1$, choosing framing so that the resulting curve does not link $P_1$. In this diagram, this yields the twist of the purple curve suggested in the rightmost piece of the tri-plane; this yields a curve representing $\overline{a}ab=b$ in $\pi_1(S^4\setminus L)$.}\label{fig:kinoshita}
}

\end{figure}

\begin{figure}{\centering

\labellist
  \pinlabel {\footnotesize $a$} at 10 28
   \pinlabel {\footnotesize $a$} at 46 28
    \pinlabel {\footnotesize $\overline{a}$} at 27 29.5
        \pinlabel {\footnotesize $\overline{a}$} at 63 29.5
          \pinlabel {\footnotesize $b$} at 100 30
   \pinlabel {\footnotesize $b$} at 136 30
    \pinlabel {\footnotesize $\overline{b}$} at 83 31
        \pinlabel {\footnotesize $\overline{b}$} at 119 31
          \pinlabel {$-n$} at 418 110
           \pinlabel {$n$} at 419 65
        \pinlabel {\footnotesize $(\overline{b}\overline{a})^{n}\overline{b}(ab)^{n-1}=\overline{a}$} at 280 130
         \pinlabel {\footnotesize $(\overline{b}\overline{a})^nb(ab)^n=\overline{b}$} at 338 152
          \pinlabel {\footnotesize $(\overline{a}\overline{b})^n\overline{a}(ba)^n=a$} at 315 17
           \pinlabel {\footnotesize $(\overline{a}\overline{b})^na(ba)^{n-1}=b$} at 335 2
      \pinlabel \rotatebox{80}{$\textcolor{darkpurple}{\overline{a}^{n-1}b(ab)^{n-1}}$} at 343 82
\endlabellist
\includegraphics[width=5in]{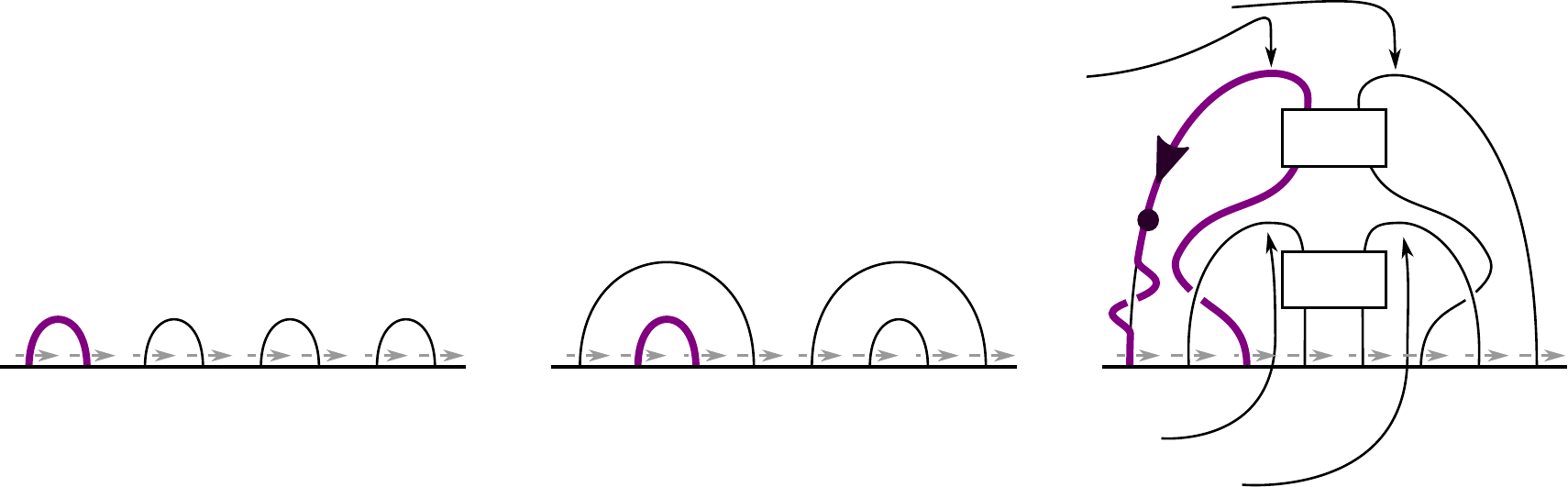}
\caption{A tri-plane diagram of $L_n=P^n_1\sqcup P^n_2$ as in Example \ref{ex:kinoshita}. We illustrate the process of obtaining a presentation of $\pi_1(S^4\setminus L_n)$. In purple/bold, we indicate a generator of $\pi_1(P^n_1)$. Near each bridge point, we draw arrows indicating an oriented meridian (which are labelled in the leftmost tangle diagram). Taking the basepoint to lie in $\partial(\nu(P^n_1))$, a parallel copy of this curve represents $\overline{a}^{n-1}b(ab)^{n-1}$.}\label{fig:kinoshita2}
}
\end{figure}

\end{example}

\subsection{Fox colorings and quandle colorings}

As in the classical Wirtinger algorithm, connected arcs of a diagram correspond to the same meridian of the knot group. Therefore, coloring the strands of a bridge trisection diagram with ``colors'' $0,1,\dots,p-1$ in such a way that at any crossing with overstrand $y$ and understrands $x$ and $z$ satisfies $c(x)+c(z)\cong 2c(y)\pmod p$, and so that the colors assigned to the points on the bridge sphere are the same in all three tangles encodes a Fox $p$-coloring of $K$. This has been observed in~\cite{Cahn-Kju}, and the connection with 3--fold covers is studied in~\cite{BCKM}.

The fundamental quandle $Q(S)$ of a knotted surface $\Ss$ in $S^4$ can be defined as the meridians of its knot group, under the new operation of conjugation. In other words, we define $x* y:=y^{-1}xy$. A presentation for the fundamental quandle is then obtained from the Wirtinger algorithm via this translation, and quandle colorings can be drawn diagrammatically on a tri-plane diagram as well. This has been studied in~\cite{Sato-Tan}, where the closely related kei colorings are used to give examples of knotted nonorientable surfaces with arbitrary bridge number.

\subsection{The Nielsen invariant of a bridge trisection}\label{subsec:Nielsen}

In this subsection we use Nielsen equivalence to distinguish certain ribbon bridge trisections of isotopic surfaces. Yasuda used Nielsen equivalence to distinguish ribbon presentations of the same 2--knot in \cite{yasuda}. Here we show that bridge trisecting those same ribbon presentations yields non-isotopic bridge trisections. Nielsen equivalence was also used by Islambouli to find inequivalent trisections of a closed 4-manifold of the same parameters \cite{gabe}.

Let $\mathcal{G}=(g_1,\dots,g_n)$ and $\mathcal{H}=(h_1,\dots,h_n)$ be two ordered lists of elements of a group $G$ such that each of the sets $\{g_1,\dots,g_n\}$ and $\{h_1,\dots,h_n\}$ generate $G$. If $\mathcal{H}$ can be obtained from $\mathcal{G}$ by a sequence of permutations, inverting elements, and replacing a generator $h_i$ with $h_i \cdot h_j$, $i\neq j$, then $\mathcal{G}$ and $\mathcal{H}$ are said to be {\it Nielsen equivalent}. Equivalently, if one thinks of $G$ and $H$ as constructed from $F_n$, the free group of rank $n$, as a quotient by normal subgroups $N_\mathcal{G}$ and $N_\mathcal{H}$, then $\mathcal{G}$ and $\mathcal{H}$ are Nielsen equivalent if and only if there is an automorphism $\phi$ of $F_n$ such that $\phi(G_i)=H_i$ for each $i$, where $G_i,H_i\in F_n$ such that $G_i \mod N_\mathcal{G}=g_i$ and $H_i \mod N_\mathcal{H}=h_i$.

Let $\TT$ be a bridge trisection and let $X_i=B^4\setminus \nu \mathcal{D}_i$ be the exterior of one of the trivial disk systems. Note that $B^4\setminus \nu \mathcal{D}_i\cong \natural^{c_i} S^1\times B^3$ is a 4-dimensional 1-handlebody. Choose any spine of $X_i$ and corresponding generators $(x_1,\dots,x_{c_i})$. The \textit{Nielsen class of $X_i$} is defined to be the Nielsen class of such a spine $(x_1,\dots,x_{c_i})$, denoted $\mathcal{N}(X_i)$. This is well-defined because any two spines are related by Nielsen transformations \cite{gabe}. Note that one can arrange that these generators $x_i$ are meridian elements for the trivial disk system, one for each component. Let $\phi_i:\pi_1(B^4\setminus \nu \mathcal{D}_i)\rightarrow \pi_1(S^4\setminus \nu \sS)$ be the (surjective) homomorphism induced by inclusion.

\begin{definition}
 Given a bridge trisection $\TT$ with disk system exteriors $X_i=B^4\setminus \nu \mathcal{D}_i$, let $\phi_i(\mathcal{N}(X_i))$ be the Nielsen class of  $\pi_1(S^4\setminus \nu S)$ induced by $\phi_i$. Then to the bridge trisection $\TT$ we associate the ordered triple of Nielsen classes $\mathcal{N}(\TT)=(\phi_1(\mathcal{N}(X_1)), \phi_2(\mathcal{N}(X_2)), \phi_3(\mathcal{N}(X_3)))$, which we call the {\bf Nielsen invariant of $\TT$}.
\end{definition}

To compute the Nielsen invariant of a bridge trisection $\TT$, first compute a presentation for $\pi_1(B^4\setminus \nu\DD_i)$. Then perform Reidemeister moves to obtain a crossingless unlink diagram, with generators expressed in terms of $\pi_1(B^4\setminus \nu\DD_i)$. Let $g_1,\dots,g_c$ denote one meridian for each component of this diagram. These are meridians to the minima of the disks, and hence form a spine of $X_i$. Then take $(g_1,\dots,g_c)$ as the Nielsen class of this disk system, and $\phi_i(\mathcal{N}(X_i)=(\phi_i(g_1)\dots,\phi_i(g_c))$.

\begin{proposition}
Let $\TT$ and $\TT'$ be bridge trisections. If $\TT$ is isotopic to $\TT'$, then $\mathcal{N}(\TT)=\mathcal{N}(\TT')$.
\end{proposition}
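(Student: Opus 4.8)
The plan is to unwind the definition of isotopic bridge trisections and transport the data defining $\mathcal{N}(\TT)$ across the ambient diffeomorphism. First I would recall that, since $\TT$ and $\TT'$ are isotopic (in particular equivalent), there is a diffeomorphism $\phi\colon S^4\to S^4$ with $\phi((X_i,\D_i))=(X_i',\D_i')$ for every $i\in\Z_3$; in particular $\phi(\Ss)=\Ss'$ and $\phi(\nu(\Ss))=\nu(\Ss')$, so $\phi$ restricts to a diffeomorphism $S^4\setminus\nu(\Ss)\to S^4\setminus\nu(\Ss')$ and, for each $i$, to a diffeomorphism $X_i\setminus\nu(\D_i)\to X_i'\setminus\nu(\D_i')$. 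Crucially, the $\Z_3$-ordering is respected: equivalence of bridge trisections does not permute the three pieces. These restrictions are compatible with the inclusions of the disk-system exteriors into the surface exterior.

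Next I would pass to fundamental groups. After fixing basepoints and a path from each to its image under $\phi$, the diffeomorphism induces isomorphisms $\Phi\colon\pi_1(S^4\setminus\nu(\Ss))\to\pi_1(S^4\setminus\nu(\Ss'))$ and $\Phi_i\colon\pi_1(X_i\setminus\nu(\D_i))\to\pi_1(X_i'\setminus\nu(\D_i'))$, well-defined up to inner automorphism, and the square relating $\Phi_i$, $\Phi$, and the inclusion-induced maps $\phi_i,\phi_i'$ commutes: $\phi_i'\circ\Phi_i=\Phi\circ\phi_i$, up to inner automorphism. Since conjugating every entry of a generating tuple is a Nielsen transformation, this inner-automorphism ambiguity is harmless; it also clarifies the meaning of the asserted equality, namely that the isotopy-induced isomorphism $\Phi$ carries the triple $\mathcal{N}(\TT)$ to the triple $\mathcal{N}(\TT')$ under the canonical identification $\pi_1(S^4\setminus\nu(\Ss))\cong\pi_1(S^4\setminus\nu(\Ss'))$ it supplies.

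Then I would argue that $\Phi_i(\mathcal{N}(X_i))=\mathcal{N}(X_i')$. Choose a spine of $X_i\setminus\nu(\D_i)$ with meridional generators $(x_1,\dots,x_{c_i})$; because $\phi$ restricts to a diffeomorphism of pairs, it carries this spine to a $1$-complex onto which $X_i'\setminus\nu(\D_i')$ deformation retracts, i.e.\ to a spine of $X_i'\setminus\nu(\D_i')$ with generating tuple $\Phi_i(x_1,\dots,x_{c_i})$. By the well-definedness of the Nielsen class of a $1$-handlebody (any two spines differ by Nielsen transformations, as used in the definition of $\mathcal{N}(X_i)$), this tuple represents $\mathcal{N}(X_i')$, so $\Phi_i(\mathcal{N}(X_i))=\mathcal{N}(X_i')$. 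Applying the commutative square, $\phi_i'(\mathcal{N}(X_i'))=\phi_i'(\Phi_i(\mathcal{N}(X_i)))=\Phi(\phi_i(\mathcal{N}(X_i)))$ for each $i$, so $\mathcal{N}(\TT')=\Phi(\mathcal{N}(\TT))$, which is precisely the claim. The main obstacle here is not conceptual but organizational: carefully pinning down basepoints and connecting paths so that the square genuinely commutes on the nose, confirming the $\Z_3$-labeling is preserved, and making explicit that the equality of Nielsen invariants is an equality modulo the canonical identification of the two complements provided by the isotopy.
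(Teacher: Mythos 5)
Your proposal is correct and follows essentially the same route as the paper: the ambient isotopy carries a spine of each $X_i$ to a spine of the corresponding $X_i'$, and the well-definedness of the Nielsen class of a spine (any two spines being related by Nielsen transformations) yields equality of the induced classes in the surface exterior. Your additional care with basepoints, the inner-automorphism ambiguity (absorbed by Nielsen moves since the tuples generate), and the meaning of the identification of the two complements is a reasonable elaboration of details the paper leaves implicit.
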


\begin{proof}
If $\TT$ is isotopic to $\TT'$, then there is an isotopy of $S^4$ taking each 4--ball-disk system $(B^4,\mathcal{D}_i)$ of $\TT$ to the corresponding pieces $(B^4,\mathcal{D}_i')$ of $\TT'$. Therefore, for each $i$, a spine of $X_i=B^4\setminus\nu\mathcal{D}_i$ is isotopic to a spine of $X_i=B^4\setminus\nu\mathcal{D}_i'$. As proven in \cite{gabe}, this implies the two spines are related by edge slides and orientation reversals, and hence their induced Nielsen classes are equivalent.
\end{proof}

A ribbon presentation $\mathfrak{R}$ induces a Wirtinger presentation for the knot group of the ribbon surface, with generating set a meridian for each component of the unlink $L=L_1\cup\cdots\cup L_n$ and one Wirtinger relation describing the linking of each tube with the unlink components. The induced Nielsen class $\mathcal{N}(\mathfrak{R})=(\mu_1,\dots,\mu_n)$ consists of these meridional generators.

\begin{proposition}
Let $\mathfrak{R}$ be a ribbon presentation of an orientable ribbon surface $\Ss$, with induced Nielsen class $\mathcal{N}(\mathfrak{R})$. Let $\TT$ be a bridge trisection of $\Ss$ induced by $\mathfrak{R}$. Then $\mathcal{N}(\TT)=(\mathcal{N}(\mathfrak{R}),\mathcal{N}(\mathfrak{R}),\mathcal{N}(\mathfrak{R}))$.
\end{proposition}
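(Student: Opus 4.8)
The plan is to verify that $\phi_i(\mathcal{N}(X_i)) = \mathcal{N}(\mathfrak{R})$ for each $i\in\Z_3$; since $\mathcal{N}(\TT)$ is the triple of these classes, this is exactly what is claimed. I will use the description of $\TT$ from Proposition~\ref{prop:ribbontrisection}: from $\mathfrak{R}=(L,H)$ with $n$ spheres and $m$ tubes one builds a banded unlink diagram $(L',B')$ for $\sS$ in which $L'$ is an $n$-component unlink whose components are, by construction, in meridian-preserving bijection with the spheres $L_1,\dots,L_n$; the $2m$ bands of $B'$ lie in the ball $B_2$ of the bridge sphere; and $L'_B$ (the surgery of $L'$ along $B'$) is an $n$-component unlink. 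By Remark~\ref{rem:rollspun} the three tangles of $\TT$ are then $\DD_1=(\overline{B_1},\overline{B_1\cap L'})$, $\DD_2=(B_2,B_2\cap L')$, and $\DD_3=(B_2,B_2\cap L'_B)$, so that $\partial\D_1=L'$, $\partial\D_3=L'_B$, and $\partial\D_2=\DD_2\cup\overline\DD_3$. For each $i$ the class $\mathcal{N}(X_i)$ is represented by the tuple consisting of one meridian per component of $\partial\D_i$ (meridians of minima of the disks $\D_i$, hence a spine of $X_i$; the choice is immaterial by \cite{gabe}).

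The case $i=1$ is immediate: $\mathcal{N}(X_1)$ is represented by a tuple of meridians of the components of $L'$, which by the meridian-preserving bijection above is the tuple $(\mu_1,\dots,\mu_n)$, and the inclusion-induced map $\phi_1$ sends each entry to the corresponding Wirtinger meridian of $\sS$. Hence $\phi_1(\mathcal{N}(X_1))=(\mu_1,\dots,\mu_n)=\mathcal{N}(\mathfrak{R})$.

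For $i\in\{2,3\}$ the tuple representing $\mathcal{N}(X_i)$ consists of meridians of a different unlink ($L'_B$, respectively the double of $L'\cap B_2$ across the small bands), so in $\pi_1(S^4\setminus\nu(\sS))$ it need not equal $(\mu_1,\dots,\mu_n)$ on the nose. The plan is to show it is nonetheless Nielsen equivalent to $(\mu_1,\dots,\mu_n)$, by tracking the chosen meridians through the $2m$ elementary band (saddle) moves that convert $L'$ into $L'_B$ inside $\sS$ (for $i=3$), respectively through the corresponding doubled band picture (for $i=2$). Each such move merges two components or splits one; because $\sS$ is orientable we may keep all arcs of the diagrams coherently oriented, and then the effect on the ordered tuple of component meridians in $\pi_1(S^4\setminus\nu(\sS))$ is always one of: a permutation of entries, inversion of an entry, replacement of an entry by its product with another entry, or conjugation of an entry by a word in the remaining entries. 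Each of these is a Nielsen transformation (for the last, iterate the fact that $g_k\mapsto g_j g_k g_j^{-1}$ with $j\neq k$ is a composition of two Nielsen moves). Composing over all $2m$ bands, and checking that the single perturbation made near each band in Proposition~\ref{prop:ribbontrisection} alters the relevant diagrams only by RII-type modifications and hence does not change the meridian tuple up to Nielsen equivalence, one concludes $\phi_i(\mathcal{N}(X_i))$ is Nielsen equivalent to $(\mu_1,\dots,\mu_n)$, i.e.\ equal to $\mathcal{N}(\mathfrak{R})$ as a Nielsen class.

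The main obstacle is the bookkeeping in the $i\in\{2,3\}$ cases: making precise how an elementary band move acts on the ordered tuple of component meridians in $\pi_1(S^4\setminus\nu(\sS))$, and verifying that the conjugating words produced are words in the \emph{other} generators, not in the generator being conjugated, so that the move really is a Nielsen transformation and not a more general automorphism of the free group. This is where coherent orientability does the essential work, ruling out steps of the form $\mu\mapsto w\mu^{-1}w^{-1}$; one expects it also to force the conjugating words to lie in the ``lower'' meridians, but this must be checked. A possibly cleaner route, avoiding much of the case analysis, is to recognize $L'_B$ together with its dual bands as itself a ribbon presentation of $\sS$ (and to treat $\D_2$ analogously), thereby reducing the $i=2,3$ computations to the $i=1$ one, at the cost of having to show that passing to the dual ribbon presentation changes the distinguished meridional tuple only by Nielsen moves.
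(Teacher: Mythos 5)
Your $i=1$ computation matches the paper's, but your plan for $i=2,3$ has a genuine gap, and it is also far more roundabout than what the construction actually requires. The gap: you propose to track the ordered tuple of component meridians through the $2m$ elementary band moves taking $L'$ to $L'_B$, claiming each move acts by a Nielsen transformation. But an individual band move either fuses two components or splits one, so it changes the \emph{length} of the tuple; Nielsen equivalence is only defined between tuples of the same length, so the move-by-move claim is not even well-posed. (Here exactly $m$ of the $2m$ bands are fusions and $m$ are splits, so the intermediate unlinks have varying numbers of components.) To repair this you would have to process the bands in fusion/fission pairs and prove that the net effect of each pair is a Nielsen move, and that argument would have to invoke the specific structure of the ribbon construction anyway --- at which point you have reconstructed, with extra bookkeeping, what that structure gives you for free.

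The paper's proof sidesteps all of this. In the ribbon bridge trisection built from the virtual graph $G$ (Figure \ref{fig:ribbon_tri-plane}), each of the three unlink diagrams $\DD_i\cup\overline\DD_{i+1}$ has exactly $n$ components, one for each vertical edge of $G$, i.e.\ one for each sphere $L_j$ of $\mathfrak{R}$. Choosing meridians to the maxima of $\widehat\DD_1$ (a spine of $X_1$, since the descending fingers can be pulled back up by a height-function isotopy), meridians to the minima of $\widehat\DD_3$ (same argument upside-down), and meridians to the components of the crossingless diagram $\widehat\DD_2$, one sees that all three spines consist of curves that are \emph{literally} meridians of the spheres $L_1,\dots,L_n$, so $\phi_1(g_j)=\phi_2(h_j)=\phi_3(k_j)=\mu_j$ on the nose. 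No Nielsen transformations need to be tracked at all. Your suggested ``cleaner route'' (viewing $L'_B$ with its dual bands as another ribbon presentation) is closer in spirit to this, but you would still owe the identification of the distinguished meridional tuple with $(\mu_1,\dots,\mu_n)$, which is precisely the content of the bijection with the vertical edges. I would recommend abandoning the band-tracking argument and instead exhibiting explicit spines for all three $X_i$ directly from the diagram.
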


\begin{proof}
Let $\TT$ be a bridge trisection of $\Ss$ induced by $\mathfrak{R}$, via a virtual graph $G$ as in Section \ref{subsec:ribbon} (in particular, refer to Figure \ref{fig:ribbon_tri-plane}). Let $g_1,\dots,g_n$ denote meridians to the maxima of the unlink diagram $\DD_1\cup\overline{\DD}_{2}$, one for each vertical edge. Note that these form a spine for $X_1$, since we can isotope the diagram using the height function (pull the descending fingers back up to the top) to obtain a crossingless unlink diagram generated by $g_1\dots,g_n$: $\mathcal{N}(X_1)=(g_1,\dots,g_n)$. Similarly, for the unlink diagram $\DD_3\cup\overline{\DD}_{1}$, we take meridians $k_i$ to the minima, one for each vertical edge, and these form a spine by the same argument upside-down, thus $\mathcal{N}(X_3)=(k_1,\dots,k_n)$. Lastly, notice that the unlink diagram $\DD_2\cup\overline{\DD}_{3}$ is crossingless, and has one component for each of the vertical edges. Taking meridians $h_1,\dots,h_n$ to these components yields $\mathcal{N}(X_2)=(h_1,\dots,h_n)$.

The proof is complete once we recognize that $\phi_1(g_i)=\phi_2(h_i)=\phi_3(k_i)=\mu_i$, for then $\phi_i(\mathcal{N}(X_i))=\mathcal{N}(\mathfrak{R})$. This is the case because the vertical edges in the virtual graph correspond to the unlink components $L_i$, so the above-specified meridians are indeed meridians to the 2-spheres $L_i$. 
\end{proof}

\begin{corollary}
Let $\mathfrak{R}$ and $\mathfrak{R}'$ be two ribbon presentations of an orientable ribbon surface $\Ss$, with induced Nielsen classes $\mathcal{N}(\mathfrak{R})$ and $\mathcal{N}(\mathfrak{R}')$, and induced bridge trisections $\TT$ and $\TT'$. If $\TT$ is isotopic to $\TT'$, then $\mathcal{N}(\mathfrak{R})=\mathcal{N}(\mathfrak{R}')$.
\end{corollary}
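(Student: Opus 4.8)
The plan is to obtain this as a formal consequence of the two propositions immediately preceding it, so essentially no new work is required. First I would apply the proposition computing the Nielsen invariant of a ribbon bridge trisection: since $\TT$ is the bridge trisection induced by $\mathfrak{R}$ and $\Ss$ is orientable, that proposition gives $\mathcal{N}(\TT)=(\mathcal{N}(\mathfrak{R}),\mathcal{N}(\mathfrak{R}),\mathcal{N}(\mathfrak{R}))$, and symmetrically $\mathcal{N}(\TT')=(\mathcal{N}(\mathfrak{R}'),\mathcal{N}(\mathfrak{R}'),\mathcal{N}(\mathfrak{R}'))$.

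Next I would invoke the proposition stating that isotopic bridge trisections have equal Nielsen invariants: from the hypothesis that $\TT$ is isotopic to $\TT'$ we conclude $\mathcal{N}(\TT)=\mathcal{N}(\TT')$ as ordered triples of Nielsen classes of $\pi_1(S^4\setminus\nu\Ss)$ (the ambient isotopy furnishes the identification of fundamental groups under which this equality is asserted). Reading off the first coordinate of each triple then yields $\mathcal{N}(\mathfrak{R})=\mathcal{N}(\mathfrak{R}')$, which is exactly the assertion; one could equally read off any of the three coordinates.

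The only point that needs a moment's attention — and it is the closest thing to an obstacle — is bookkeeping about where the various Nielsen classes live: $\mathcal{N}(\mathfrak{R})$ is initially defined as a Nielsen class of the knot group presented from the ribbon presentation $\mathfrak{R}$, while $\mathcal{N}(\TT)$ records Nielsen classes in $\pi_1(S^4\setminus\nu\Ss)$ computed from the induced tri-plane diagram. Here I would simply note that the preceding proposition already identifies these: it establishes $\phi_i(\mathcal{N}(X_i))=\mathcal{N}(\mathfrak{R})$ via the equalities $\phi_1(g_i)=\phi_2(h_i)=\phi_3(k_i)=\mu_i$, so the entries of $\mathcal{N}(\TT)$ are literally the Nielsen class $\mathcal{N}(\mathfrak{R})$ inside $\pi_1(S^4\setminus\nu\Ss)$, and no separate translation is needed. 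Thus the corollary is immediate from the two propositions; all the substantive content lives in them (and, in the application that follows, in Yasuda's construction of ribbon presentations of a common $2$--knot with inequivalent Nielsen classes).
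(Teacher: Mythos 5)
Your proposal is correct and is exactly the argument the paper intends: the corollary is stated without proof precisely because it follows formally from the two preceding propositions, in the way you describe. Your remark about the identification $\phi_i(\mathcal{N}(X_i))=\mathcal{N}(\mathfrak{R})$ is a reasonable bit of bookkeeping that the paper handles inside the second proposition, so nothing further is needed.
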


\begin{remark} Recall the Schubert notation for a 2--bridge knot: let $\alpha,\beta$ be coprime integers with $\alpha>0$, $\beta$ odd, and $-\alpha<\beta<\alpha$. Schubert proved that the 2-bridge knot $S(\alpha,\beta)$ is equivalent to $S(\alpha^*,\beta^*)$ if and only if $\alpha=\alpha^*$ and $\beta = \beta^*$ or $\beta \beta^* \equiv 1 \mod 2\alpha$ \cite{schubert}. 
The Schubert notation indicates a particular bridge splitting of the knot $S(\alpha,\beta)$ with two minima. Taking meridians to the minima as generators, this induces a specific Nielsen class for the knot group $\pi_1(S^3\setminus S(\alpha,\beta))$.
F\"{u}ncke proved that if $\beta\beta^*\equiv 1 \mod 2\alpha$ and $\beta\neq\pm\beta^*$, then the induced Nielsen classes are inequivalent \cite{funcke}. Yasuda observed that spinning the knot $S(\alpha,\beta)$ by puncturing the knot at one of the maxima induces a ribbon presentation for $\text{Spin}(S(\alpha,\beta))$ with spheres corresponding to the minima and a tube corresponding to the remaining maximum \cite{yasuda}. Thus the Nielsen class induced by this ribbon presentation is the same as the one induced by the embedding of the original 2--bridge knot, yielding distinct ribbon presentations of the same spun 2-knots. The above corollary says that the bridge trisections induced by these ribbon presentations are also distinct. 
\end{remark}

\begin{corollary}
\label{cor:differentbridge}
There exist infinitely many ribbon 2--knots with pairs of bridge trisections $\TT$ and $\TT'$, both induced by ribbon presentations, which are non-isotopic as bridge trisections.
\end{corollary}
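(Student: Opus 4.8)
The plan is to combine the Nielsen-invariant obstruction developed above with the classical examples of Yasuda and F\"uncke. By the preceding corollary, if two bridge trisections induced by ribbon presentations $\mathfrak{R}$ and $\mathfrak{R}'$ of the \emph{same} ribbon surface are isotopic, then $\mathcal{N}(\mathfrak{R})=\mathcal{N}(\mathfrak{R}')$ as Nielsen classes of the knot group. So the entire bridge-trisection content is already in hand, and it suffices to exhibit infinitely many ribbon $2$--knots, each of which admits two ribbon presentations whose induced Nielsen classes are \emph{inequivalent}.

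First I would set up the family following the remark above: for coprime $\alpha,\beta$ with $\alpha>0$, $\beta$ odd, and $-\alpha<\beta<\alpha$, choose $\beta^*$ with $\beta\beta^*\equiv 1\pmod{2\alpha}$ and $\beta\neq\pm\beta^*$. By Schubert's classification the $2$--bridge knots $S(\alpha,\beta)$ and $S(\alpha,\beta^*)$ are isotopic, so their spins satisfy $\text{Spin}(S(\alpha,\beta))=\text{Spin}(S(\alpha,\beta^*))$ as $2$--knots in $S^4$. Yasuda's observation produces, from a fixed $2$--bridge presentation, a ribbon presentation of this spun knot whose spheres correspond to the minima and whose single tube corresponds to the remaining maximum, and whose induced Nielsen class is exactly the Nielsen class of $\pi_1(S^3\setminus S(\alpha,\beta))$ coming from the two-minimum bridge splitting. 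Carrying this out for $S(\alpha,\beta)$ and for $S(\alpha,\beta^*)$ gives two ribbon presentations $\mathfrak{R}$, $\mathfrak{R}'$ of one $2$--knot, and F\"uncke's theorem guarantees $\mathcal{N}(\mathfrak{R})$ and $\mathcal{N}(\mathfrak{R}')$ are inequivalent. Feeding these into the proposition that a ribbon presentation $\mathfrak{R}$ induces a bridge trisection $\TT$ with $\mathcal{N}(\TT)=(\mathcal{N}(\mathfrak{R}),\mathcal{N}(\mathfrak{R}),\mathcal{N}(\mathfrak{R}))$, and then invoking the preceding corollary, shows that the induced bridge trisections $\TT$ and $\TT'$ of this $2$--knot are non-isotopic.

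Finally I would promote "one example" to "infinitely many" by varying the parameters: there are infinitely many $\alpha$ admitting such a pair $\beta,\beta^*$ (this is precisely the family Yasuda uses to get infinitely many $2$--knots with non-equivalent ribbon presentations), and since a $2$--bridge knot is recovered from its spun $2$--knot (e.g.\ from the knot group together with its peripheral structure, spinning being injective on knot types), the resulting spun $2$--knots are pairwise non-isotopic, hence genuinely infinitely many. I do not expect new topological input to be needed; the only delicate point is bookkeeping, namely confirming that the Nielsen class fed into $\mathcal{N}(\TT)$ by the ribbon-presentation construction is literally the Schubert/F\"uncke Nielsen class and not merely an automorphic image of it — which is exactly why the strict hypothesis $\beta\neq\pm\beta^*$ (rather than $\beta\neq\beta^*$) is used, since it is the condition under which F\"uncke's inequivalence is robust to the choices of meridians and orientations in the construction.
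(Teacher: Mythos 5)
Your proposal is correct and follows essentially the same route as the paper: the authors also deduce the corollary by combining the preceding Nielsen-invariant corollary with the Schubert/F\"uncke/Yasuda chain (Schubert-equivalent pairs $S(\alpha,\beta)\sim S(\alpha,\beta^*)$ with $\beta\beta^*\equiv 1 \bmod 2\alpha$, $\beta\neq\pm\beta^*$, spun to give ribbon presentations with F\"uncke-inequivalent Nielsen classes), with $S(7,-3)$ and $S(7,-5)$ presenting $5_2$ as their explicit example. Your extra care about the infinitude of the family and the injectivity of spinning is a reasonable elaboration of what the paper leaves implicit.
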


\begin{example}
As pointed out in \cite{yasuda}, $S(7,-3)$ and $S(7,-5)$ both present the knot $5_2$; thus the ribbon presentations induced by spinning these bridge splittings, as well as the induced bridge trisections are distinct. 

\end{example}

Stabilizing a surface by a trivial 1-handle stabilization does not change the group of its complement. If it is represented by a ribbon presentation, then it also does not change the induced Nielsen class. Thus by taking the connected sum of the above examples and any number of copies of the 3-bridge trisection of the unknotted torus, we obtain infinitely many pairs of orientable surface knots of any genus with inequivalent bridge trisections.

\begin{question}
If two ribbon presentations of a surface-knot are equivalent, must the bridge trisections induced by these ribbon presentations be isotopic?
\end{question}

\begin{question}
The three Nielsen classes induced by a ribbon bridge trisection are all equal. Does there exist a bridge trisection $\TT$ whose Nielsen invariant contains distinct Nielsen classes?
\end{question}

\bibliographystyle{amsalpha}
\bibliography{BT-facts}

\end{document}